\newcommand{\p}{\mathcal{P}}
\newcommand{\G}{\mathcal{G}}
\newcommand{\W}{\mathcal{W}}
\newcommand{\E}{\mathcal{E}}
\renewcommand{\S}{\mathcal{S}}
\newtheorem{theorem}{Theorem}[section]
\theoremstyle{plain}
\newtheorem{lem}[theorem]{Lemma}
\newtheorem{claim}[theorem]{Claim}
\newtheorem{rem}[theorem]{Remark}
\newtheorem{ques}{Question}
\newtheorem{example}{Example}
\numberwithin{equation}{subsection}
\theoremstyle{definition}
\newtheorem{defi}[theorem]{Definition}
\title{Lower bound on the number of non-simple geodesics on surfaces}
\author{Jenya Sapir}
\begin{document}

\begin{abstract}We give a lower bound on the number of non-simple closed curves on a
hyperbolic surface, given upper bounds on both length and self-intersection
number. In particular, we carefully show how to construct closed geodesics on
pairs of pants, and give a lower bound on the number of curves in this case.
The lower bound for arbitrary surfaces follows from the lower bound on pairs of
pants. This lower bound demonstrates that as the self-intersection number $K =
K(L)$ goes from a constant to a quadratic function of $L$, the number of closed
geodesics transitions from polynomial to exponential in $L$. We show upper
bounds on the number of such geodesics in a subsequent paper.
\end{abstract}
\maketitle

\section{Introduction}

\subsection{Statement of results}
Let $\p$ be a hyperbolic pair of pants with geodesic boundary, and let $\S$ be a hyperbolic genus $g$ surface with $n$ geodesic boundary components. Given a particular surface, let $\G^c$ be the set of closed geodesics. Set
\[
 \G^c(L,K) = \{ \gamma \in \G^c \ | \ l(\gamma) \leq L, i(\gamma, \gamma) \leq K \}
\]
where $l(\gamma)$ is geodesic length and $i(\gamma, \gamma)$ is geometric self-intersection number. We are interested in the following question:
\begin{ques}
 If $K = K(L)$ is a function of $L$, then how does $\#\G^c(L,K)$ grow with $L$?
\end{ques}

In this paper, we give a lower bound for growth. In future papers, we will get upper bounds on this number for an arbitrary surface, and tighter upper bounds on pairs of pants $\p$. The reason that the upper bounds are tighter on pairs of pants is that we have more control in how we construct geodesics there. In fact, we give a way to construct geodesics on pairs of pants in this paper.

Given a hyperbolic pair of pants, $\p$, let $l_{max} = l_{max}(\p)$ be the larger of the length of the longest boundary component of $\p$ or distance between boundary components of $\p$. We say that $l_{max}$ is the \textbf{length of $\p$}.

We get the following lower bound for a pair of pants $\p$:
\begin{theorem}
\label{thm:LowerBound}
 Let $\p$ be a hyperbolic pair of pants with length $l_{max}$, as defined above. If $L \geq 8 l_{max}$ and $K \geq 12$, we have that
\[
 \# \G^c(L,K) \geq 2 + \frac{1}{2} \min\{  2^{\frac{1}{8l_{max}}L},2^{\sqrt{\frac{ K}{ 12}}}\}
\]
\end{theorem}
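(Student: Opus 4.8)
The plan is to construct an explicit, injective family of closed geodesics on $\p$ indexed by finite words in some alphabet of ``building block'' arcs, and then count how many such words satisfy both the length bound $l(\gamma)\leq L$ and the self-intersection bound $i(\gamma,\gamma)\leq K$. The natural building blocks are geodesic arcs that wind around the pair of pants; on a pair of pants the homotopy classes of closed curves are encoded combinatorially (for instance by cyclic words in the two generators of $\pi_1(\p)$, or by sequences of crossings with a fixed collection of seams/spine arcs). I would fix such a coding, show that each admissible word is realized by a unique closed geodesic, and obtain two geometric estimates: a linear upper bound $l(\gamma)\leq c\cdot n$ on the length of the geodesic coded by a word of combinatorial length $n$, and a quadratic bound $i(\gamma,\gamma)\leq c'\cdot n^2$ on its self-intersection number. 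The constants should come out so that $c = 8 l_{max}$ governs the length and the self-intersection grows like $(n/\sqrt{12})^2$, matching the two terms in the $\min$.

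Concretely, I would first set up the length estimate: each building-block arc has length at most (something comparable to) $l_{max}$, and concatenating $n$ of them and pulling tight to the geodesic representative gives $l(\gamma)\lesssim n\, l_{max}$. Solving $n\, l_{max}\leq L$ up to the constant $8$ shows that words of length $n\leq \tfrac{1}{8 l_{max}}L$ are length-admissible, which accounts for the $2^{\frac{1}{8l_{max}}L}$ term. Next I would set up the intersection estimate: two sub-arcs of the curve can cross only in a controlled region, and counting pairs of building blocks shows $i(\gamma,\gamma)\leq \binom{n}{2}$-type quantity, i.e.\ $O(n^2)$; arranging the constant so that $i(\gamma,\gamma)\leq K$ holds whenever $n\leq \sqrt{K/12}$ accounts for the $2^{\sqrt{K/12}}$ term. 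Taking words of combinatorial length exactly $n = \min\{\tfrac{1}{8l_{max}}L,\ \sqrt{K/12}\}$ then gives a family all of whose members satisfy both bounds.

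The counting step is where the factor $2^{n}$ appears: if the alphabet has (at least) two symbols and essentially all words of length $n$ give geometrically distinct geodesics, there are on the order of $2^{n}$ of them. I would need an injectivity statement — distinct admissible words yield distinct closed geodesics — most cleanly obtained by choosing the coding so that the combinatorial word is recoverable from the geodesic (e.g.\ as the cyclic sequence of intersections with the spine of $\p$, which is a homotopy invariant). The leading $2 + \tfrac12$ and the $\tfrac12$ coefficient suggest accounting for the two boundary-parallel geodesics separately and then discarding a bounded fraction of words to handle overcounting from cyclic rotation and orientation-reversal, which typically costs only a constant factor and is absorbed into the $\tfrac12$.

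The hard part, I expect, will be the self-intersection estimate together with injectivity: bounding $i(\gamma,\gamma)$ by $O(n^2)$ with the precise constant $1/12$ requires understanding exactly where and how often the geodesic representative of a coded curve crosses itself, which depends on the hyperbolic geometry of $\p$ rather than just the combinatorics, and controlling the crossings so that the bound is genuinely quadratic (not worse) is the delicate point. The remaining steps — the linear length bound, the $2^n$ count, and peeling off the constant factors — are comparatively routine once the coding and these two geometric estimates are in place.
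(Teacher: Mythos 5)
Your overall architecture is the same as the paper's: code closed geodesics on $\p$ by cyclic words (the paper uses words in the oriented edges of the hexagon decomposition, generated from closed paths in an auxiliary directed graph), prove a linear length bound and a quadratic self-intersection bound in terms of combinatorial length, prove injectivity, and count exponentially many admissible words. For the two steps you defer: the paper's quadratic intersection bound is purely topological, not hyperbolic-geometric --- it builds an explicit (non-geodesic) representative of the homotopy class out of twisting arcs in disjoint embedded cylinders and transition arcs in disjoint seam neighborhoods, counts that curve's crossings by hand, and then uses only the fact that the geodesic minimizes self-intersection in its free homotopy class. Injectivity is proved by lifting the coded curve to the universal cover and observing that it determines an embedded line in the tree dual to the hexagonal tiling, hence a unique complete geodesic. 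So your worry that the constant in the intersection bound requires delicate hyperbolic geometry is misplaced; the genuinely delicate point is elsewhere.

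The gap is in your counting step. You assert that overcounting from cyclic rotation ``costs only a constant factor'' and can be absorbed into the $\tfrac12$. That is false: a word of length $n$ has up to $n$ distinct rotations, so passing from linear words to cyclic words divides the count by roughly $n$, and $2^n/n$ is not bounded below by $\tfrac12\,2^n$. The paper confronts this head-on with Burnside's lemma applied to the trace of the adjacency matrix of its graph: the number of cyclic paths of length $2m$ is exactly $2 + \frac{1}{m}\sum_{d\mid m}\phi(d)\,4^{m/d}$, and the factor $\frac1m$ is absorbed by \emph{halving the exponent}, via $\frac1m 4^m \geq 2^m$. This halving is precisely where the constants $8l_{max}$ and $\sqrt{K/12}$ in the statement come from: the geometric estimates give the sharper admissibility thresholds $|\tau| \leq \frac{1}{4l_{max}}L$ and $|\tau| \leq \sqrt{K/3}$, and the exponent is then cut in half by the rotation loss (the leading $2$ and the coefficient $\tfrac12$ also fall out of this computation, not from treating boundary-parallel curves separately --- indeed simple closed geodesics are not produced by the construction at all). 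Your plan instead spends the factors of $8$ and $12$ on the geometric estimates and then needs a full $2^n$ count of \emph{cyclic} words at those thresholds, which the rotation loss does not permit. The repair is exactly the paper's bookkeeping: prove the thresholds $\frac{1}{4l_{max}}L$ and $\sqrt{K/3}$, count cyclic words of length $n$ by $\gtrsim 4^{n/2}/n \geq 2^{n/2}$, and let the halved exponent produce the stated bound.
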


A direct consequence of this theorem is the following lower bound for an arbitrary surface $\S$:
\begin{theorem}
\label{thm:LowerBoundSurface}
 Let $X$ be the hyperbolic metric on $\S$. Then whenever $K > 12$ and $L > 6 s_X \sqrt K$ we have 
 \[
 \# \G^c(L,K) \geq c_X \left (\frac{L}{6 \sqrt K} \right )^{6g-6+2n}2^{\sqrt{\frac{ K}{ 12}}}
 \]
 where $s_X$ and $c_X$ are constants that   depend only on the metric $X$.
\end{theorem}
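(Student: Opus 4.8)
The plan is to derive Theorem~\ref{thm:LowerBoundSurface} from the pair-of-pants bound in Theorem~\ref{thm:LowerBound} by embedding many pairs of pants into $\S$ and counting geodesics separately in each. The key geometric input is a pants decomposition of $\S$: since $\S$ has genus $g$ with $n$ boundary components, its Euler characteristic is $2-2g-n$, so a pants decomposition contains exactly $2g-2+n$ pairs of pants, and the number of curves we will ultimately be counting scales like the exponent $6g-6+2n = 3(2g-2+n) + \dots$ appearing in the statement. More precisely, I expect the power $6g-6+2n$ to arise not from the number of pants directly, but from the dimension of Teichm\"uller-type parameters, or equivalently from distributing a length budget across roughly that many independent ``slots'' along the curve.

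Next I would make the reduction quantitative. Fix $K>12$ and a total length budget $L$. The idea is to build closed geodesics on $\S$ that spend a controlled amount of length and self-intersection inside a single embedded pair of pants $\p \subset \S$, while the remaining length is used to route the curve through the rest of the surface in combinatorially distinct ways. Applying Theorem~\ref{thm:LowerBound} to $\p$ with parameters $L' = L/(6\sqrt K)$-scale and the full $K$ gives the factor $2^{\sqrt{K/12}}$, which is exactly the second term in the minimum in Theorem~\ref{thm:LowerBound}; the hypothesis $L > 6 s_X\sqrt K$ is precisely what guarantees $L' \geq 8 l_{max}$ so that the pants theorem applies (here $s_X$ absorbs the geometry, i.e. the maximum of $l_{max}(\p)$ over pants in a fixed decomposition). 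The polynomial prefactor $\left(\tfrac{L}{6\sqrt K}\right)^{6g-6+2n}$ should then come from counting the number of essentially different ways to deploy the leftover length across the $6g-6+2n$ combinatorial degrees of freedom, each contributing a linear-in-$L'$ number of choices.

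The main structural step, and the place I would be most careful, is ensuring that distinct curves constructed this way are genuinely distinct as \emph{closed geodesics on $\S$} and that their lengths and self-intersection numbers stay within the global budgets $L$ and $K$. Self-intersection is subadditive-unfriendly: intersections created when the local pants curve is spliced into the ambient surface, and intersections among the routing arcs, must be bounded so that the total stays $\leq K$. I would handle this by choosing the embedded pairs of pants and the connecting arcs to meet minimally (taking geodesic representatives and controlling crossings via the collar lemma), so that the global self-intersection number differs from the local one by a bounded additive constant absorbed into $c_X$.

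Finally, I would assemble the count: the product of the per-pants exponential factor $2^{\sqrt{K/12}}$ and the polynomial routing factor yields the claimed bound, with $c_X$ collecting all metric-dependent constants (collar widths, the additive intersection overhead, and the loss from passing between the rescaled length $L/(6\sqrt K)$ and $8l_{max}$). The hardest part, as noted, will be the bookkeeping that keeps self-intersection globally under $K$ while still extracting both the exponential and the full polynomial contributions; a clean way to do this is to confine all the exponentially-many distinct behavior to one pair of pants (so the self-intersection is controlled by Theorem~\ref{thm:LowerBound} directly) and use the remaining pants only to generate the polynomially-many distinct homotopy classes with uniformly bounded extra length and intersections.
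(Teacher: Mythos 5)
Your reduction to Theorem \ref{thm:LowerBound} and your closing suggestion to confine all the exponential behavior to a single pair of pants are sound instincts, but the mechanism you propose for the polynomial factor $\left(\frac{L}{6\sqrt K}\right)^{6g-6+2n}$ --- routing the curve out of the pants and through the rest of a fixed pants decomposition --- has genuine gaps that the sketch does not close. First, the exponent: you never identify the $6g-6+2n$ ``combinatorial degrees of freedom,'' and you hedge on where they come from. Twisting around the $3g-3+n$ curves of a fixed pants decomposition gives only about $(L')^{3g-3+n}$ distinct routings within a length budget $L'$; to double the exponent you need both twist and transverse (intersection-number) parameters, i.e.\ Dehn--Thurston-type coordinates, and showing that polynomially many such choices are realized by closed curves of length $O(L)$ is essentially the content of Mirzakhani's counting theorem --- you would be re-proving it. Second, the intersection bookkeeping: the claim that splicing adds only a ``bounded additive constant'' to self-intersection is false as stated. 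The local pants geodesic responsible for the factor $2^{\sqrt{K/12}}$ twists on the order of $\sqrt K$ times around boundary components, so any routing arc re-entering that pair of pants crosses those strands and picks up roughly $\sqrt K$ intersections per pass, not $O(1)$. Third, distinctness: nothing in the sketch shows that distinct routings yield distinct free homotopy classes, hence distinct geodesics, in $\S$.

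The paper's proof avoids all three problems with the one idea you are missing: it never routes curves outside a pair of pants at all. Every constructed geodesic lies entirely inside a single embedded geodesic pair of pants $\p \subset \S$, and the polynomial factor counts not routings but \emph{pants}: by \cite{Mirzakhani08}, the number of embedded pairs of pants in $\S$ with total boundary length at most $\ell$ grows like $c(X)\ell^{6g-6+2n}$, so there are at least $c'(X)(l_0 - s_X)^{6g-6+2n}$ embedded pants with $l_{max}(\p) \leq l_0 \approx \frac{L}{\sqrt K}$, where $s_X$ is a collar/systole constant bounding the gap between $l_{max}(\p)$ and the total boundary length of $\p$. The hypothesis $L > 6 s_X \sqrt K$ guarantees both that Theorem \ref{thm:LowerBound} applies to each such pair of pants and that the minimum there is the $K$-term, so each contributes $2 + \frac 12 2^{\sqrt{K/12}}$ geodesics; distinctness across different pants is exactly Lemma \ref{lem:CurvesDistinct}, which says non-simple closed curves in non-homotopic embedded pairs of pants are never freely homotopic in $\S$. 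This sidesteps splicing, intersection overhead, and any Dehn--Thurston counting entirely.
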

The constant $s_X$ is roughly the width of the collar neighborhood of the systole of $X$, and $c_X$ is a constant related to the number of pairs of pants in $\S$ whose total boundary length is at most $L$.

Theorem \ref{thm:LowerBoundSurface} demonstrates that as $K = K(L)$ goes from a constant to a quadratic function in $L$, the number of closed geodesics on $\S$ transitions from polynomial to exponential in $L$.
(See Section \ref{sec:PreviousResults} for why we should expect such a transition.)

If $K$ is a constant, and $L$ is very large, this theorem says 
\[
\#\G^c(L,K) \geq c_X(K) L^{6g-6+2n}
\]
for $c_X(K)$ a new constant depending on $X$ and the constant $K$. This is consistent with the asymptotic results in \cite{Mirzakhani08,Rivin12} when $K =0$ and 1.  

By \cite{Basmajian13}, for any $\gamma \in \G^c$, $i(\gamma, \gamma) \leq \kappa l(\gamma)^2$, where $\kappa$ is a constant depending only on the metric. So we only need to consider functions $K(L)$ that grow at most like $O(L^2)$. For $K = O(L^2)$, however, we have that $\frac{L}{6 \sqrt K} = O(1)$, and Theorem \ref{thm:LowerBoundSurface} gives an exponential lower bound on $\#\G^c(L,K)$ in $L$. For example, if $K(L) = L^2$, then
\[
 \# \G^c(L,L^2) \geq c'_X 2^{\frac{L}{12}}
\]
where $c'_X$ is a new constant depending only on $X$.  This is consistent with the growth of all closed geodesics with length at most $L$ in \cite{MargulisPhD}. 

\subsection{Previous results}
\label{sec:PreviousResults}
The problem of counting closed geodesics in many contexts has been studied extensively. There is an excellent survey of the history of this problem by Richard Sharp that was the  published in conjunction with Margulis's thesis in \cite{Margulis04}. The following is a brief, but incomplete, overview. 

Let
\[
 \G^c(L) = \{\gamma \in \G^c \ | \ l(\gamma) \leq L\}
\]
The famous result in Margulis's thesis states that if $\S$ is negatively curved with a complete, finite volume metric, then
\begin{equation}
\label{GL}
 \#\G^c(L) \sim \frac{e^{\delta L}}{\delta L}
\end{equation}
where $\delta$ is the topological entropy of the geodesic flow, and where $f(L) \sim g(L)$ if $\lim_{L \rightarrow \infty}\frac{f(L)}{g(L)} = 1$ \cite{MargulisPhD}. (Note that $\delta = 1$ when $\S$ is hyperbolic.) A version of this result for hyperbolic surfaces was first proven by Huber \cite{Huber59}. There are also many other, later versions of this result for non-closed surfaces. For example, see \cite{Colin85,Patterson88,LP82,Lalley89} and \cite{Guillope86}.

Recently, there has been work on the dependence of the number of closed geodesics on their self-intersection number as well as length.
The goal is to answer the following question.
\begin{ques}
 If $K = K(L)$ is a function of $L$, what is the asymptotic growth of $\#\G^c(L,K)$ in terms of $L$?
\end{ques}

As part of her thesis, Mirzakhani showed that for a hyperbolic surface $\S$ of genus $g$ with $n$ punctures,
\[
 \#\G^c(L,0) \sim c(\S) L^{6g-6 + 2n}
\]
where $c(\S)$ is a constant depending only on the geometry of $\S$ \cite{Mirzakhani08}. Rivin extended this result to geodesics with at most one self-intersection, to get that 
\[
  \# \G^c(L,1) \sim c'(\S) L^{6g-6 +2n}
\]
where $c'(\S)$ is another constant depending only on the geometry of $\S$ \cite{Rivin12}. It should be noted that polynomial upper and lower bounds on $\#\G^c(L,0)$ were first shown by Rees in \cite{Rees81}.

However, no asymptotics are yet known for arbitrary functions $K(L)$. This paper, and the one that follows, give bounds on $\#\G^c(L,K)$ for $L$ and $K$ large enough.

\subsection{Idea of proof}
Theorem \ref{thm:LowerBoundSurface} is a direct consequence of Theorem \ref{thm:LowerBound}, which is proven as follows:
\begin{itemize}
 \item In Section \ref{sec:CombModel}, we create a combinatorial model for geodesics on a pair of pants. Each geodesic $\gamma$ can be represented as a cyclic word $w(\gamma)$ in a finite alphabet (Lemma \ref{lem:ProjectionConstruction}). We then give some basic properties of these words in Section \ref{sec:WordStructure}.
 
 These words are also the key ingredient in getting an upper bound on $\# \G^c(L,K)$ for pairs of pants. This is done in a subsequent paper.
 \item We show that if $w = w(\gamma)$, then
 \[
  l(\gamma) \asymp |w|
 \]
 where $|w|$ denotes word length (Lemma \ref{lem:GeodesicAndWordLength}), and
 \[
  i(\gamma, \gamma) \leq i(w,w)
 \]
 (Lemma \ref{lem:IntersectionLowerBound}), where $i(w,w)$ is an intersection number for words defined in Definition \ref{def:IntForWords}.
 \item Then in Section \ref{sec:ConstructingGeodesics}, we construct a set of distinct geodesics. We show that each geodesic $\gamma$ arising from this construction lies in $\G^c(L,K)$ by bounding $|w(\gamma)|$ and $i(w(\gamma), w(\gamma))$ from above. 
 
 We get a lower bound on the number of geodesics we construct, giving us a lower bound on $\#\G^c(L,K)$. For a more detailed summary, see Section \ref{sec:ProofSummary}.
\end{itemize}

We prove Theorem \ref{thm:LowerBoundSurface} in Section \ref{sec:SurfaceBound}. We look at all geodesically embedded pairs of pants in $\S$ that have closed geodesics of length at most $L$. Summing $\#\G^c(L,K)$ over all these pairs of pants gives the theorem. 
%

This paper is part of the author's PhD thesis, which was completed under her advisor, Maryam Mirzakhani. The author would especially like to thank her for the many conversations that led to this work. The author would also like to thank Jayadev Athreya, Steve Kerckhoff and Chris Leininger for their help and support. 

\section[A combinatorial model]{A combinatorial model for geodesics on pairs of pants}
\label{sec:CombModel}
Let $\p$ be a hyperbolic pair of pants with geodesic boundary. In this section, we construct a combinatorial model for closed geodesics on $\p$, and show that this model allows us to recover geometric properties of the corresponding geodesics. We do this as follows. First, there is a unique way to write $\p$ as the union of two congruent right-angled hexagons. Take this decomposition (Figure \ref{fig:HexagonDecomp}).
\begin{figure}[h!] \centering
 \includegraphics{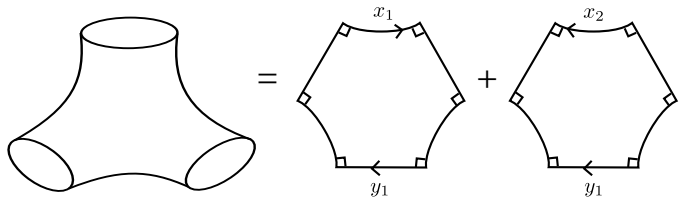}
\caption[Hexagon decomposition]{The hexagon decomposition of $\p$ with boundary edges $x_1$ and $x_2$ and seam edge $y_1$ labeled.}
\label{fig:HexagonDecomp}
\end{figure} 

Let $\E$ be the set consisting of two copies of each edge in the hexagon decomposition, one copy for each orientation. The set $\E$ consists of oriented edges $x_1, \dots, x_{12}$ that lie on the boundary of $\p$ and oriented edges $y_1, \dots, y_6$ that pass through the interior of $\p$. We call $x_1, \dots, x_{12}$ \textbf{boundary edges} and $y_1, \dots, y_6$ \textbf{seam edges}.

We can model closed geodesics on $\p$ by looking at closed concatenations of edges in $\E$. If $p$ is a closed concatenation of edges in $\E$, then it corresponds to a cyclic word $w$ with letters in $\E$. We want to look at the following subset of such words.

\begin{defi}
\label{defi:W}
 Let $\W$ be the set of cyclic words $w$ with letters in $\E$ so that
 \begin{itemize}
  \item The letters of $w$ can be concatenated (in the order in which they appear) into a closed path $p$.
  \item The curve $p$ does not back-track.
  \item Lastly, we want a technical condition: each $w \in \W$ can be written as $w = b_1 s_1 \dots b_n s_n$ where $b_i$ is a sequence of boundary edges, $|b_i| \geq 2$, and $s_i$ is a seam edge ($|s_i| = 1$) for each $i$, unless $n = 1$, in which case $w = b_1$.
  \end{itemize}
\end{defi}

Clearly, there is a map $\W \rightarrow \G^c$. For each $w \in \W$, we simply take the corresponding closed curve $p(w)$. Each closed curve on $\p$ has exactly one geodesic in its free homotopy class. Let $\gamma(w)$ be the geodesic in the free homotopy class of $p(w)$. Then the map $w \mapsto \gamma(w)$ is well-defined.

\subsection{Turning Geodesics into Words}
\label{sec:ProjectionOfCurve}

Conversely, we can explicitly construct a map going back. In fact, we can construct an \textit{injective} map $\G^c \rightarrow \W$ sending each geodesic $\gamma$ to some preferred word in $\W$. 




\subsubsection{The Projection $p(\gamma)$ of a Closed Geodesic $\gamma$} 
\label{sec:JustProjection}
For each closed geodesic $\gamma$, we first construct a closed curve $p(\gamma)$ that lies on the boundaries of the hexagons and is freely homotopic to $\gamma$. Then $p(\gamma)$ will be known as the \textbf{projection} of $\gamma$ to the edges in $\E$. We give the desired properties of $p(\gamma)$ in the following lemma. These properties will allow us to convert $p(\gamma)$ into a word $w(\gamma) \in \W$.

\begin{lem}[Construction of $p(\gamma)$] 
\label{lem:ProjectionConstruction}
Let $\gamma$ be a closed geodesic on $\p$. Then there is a closed curve $p(\gamma)$ that has the following properties:
\begin{enumerate}
\item $p(\gamma)$ is freely homotopic to $\gamma$.
\item $p(\gamma)$ is a concatenation of edges in $\E$.
\item each boundary edge in $p(\gamma)$ is concatenated to at least one other boundary edge.
\end{enumerate}
\end{lem}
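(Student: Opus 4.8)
The plan is to construct $p(\gamma)$ by projecting the geodesic $\gamma$ onto the one-skeleton of the hexagon decomposition, and then to homotope away any features that violate the stated properties. Since $\gamma$ is a closed geodesic on $\p$, it crosses the three seams (the edges $y_i$ lying in the interior) transversally in some finite number of points, and between consecutive seam crossings it travels through one of the two hexagons. First I would cut $\gamma$ at each of its seam crossings, obtaining a finite cyclic sequence of geodesic arcs, each of which is a properly embedded arc in a single right-angled hexagon with endpoints on the hexagon's edges. The projection $p(\gamma)$ will be built by replacing each such arc with an edge-path along the boundary of the hexagon that is homotopic (rel endpoints, or rel the edges the endpoints lie on) to the arc. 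Concatenating these edge-paths around $\gamma$ produces a closed curve lying entirely on the edges in $\E$, which gives property (2), and the homotopies are performed within the hexagons so that the free homotopy class is preserved, giving property (1).

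The key steps, in order, are as follows. First, fix the hexagon decomposition and record that each hexagon is a disk bounded by alternating seam edges and boundary edges. Second, analyze a single geodesic arc $\alpha$ running through one hexagon: its two endpoints lie on the hexagon boundary (on seam edges, since we cut at seam crossings), and because the hexagon is simply connected, $\alpha$ is homotopic rel its endpoints to a unique edge-path running along the hexagon boundary between those endpoints. I would choose, between the two boundary arcs of the hexagon joining the endpoints, a canonical one (for instance the one not crossing a designated vertex, or the shorter combinatorial path), so that the replacement is well-defined. Third, I would concatenate these boundary edge-paths cyclically; the seam crossings of $\gamma$ become the seam edges traversed in $p(\gamma)$, and the hexagon arcs become runs of boundary edges. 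Fourth, to obtain property (3), I would observe that a single boundary edge appearing in isolation between two seam edges corresponds to an arc of $\gamma$ that enters and exits through seams while brushing exactly one boundary edge; I would argue that such a configuration can be eliminated or is automatically ruled out by the choice of canonical edge-path, so that boundary edges always appear in runs of length at least two. This last point is exactly the combinatorial shadow of the technical condition $|b_i| \geq 2$ in Definition \ref{defi:W}.

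I expect the main obstacle to be property (3): controlling the local structure of $p(\gamma)$ so that no boundary edge is ever isolated between two consecutive seam edges. The difficulty is that the naive projection of a geodesic arc could, a priori, pass along just one boundary edge between two seam crossings, and ruling this out requires a genuine geometric argument about how geodesics can meet the seams and boundary of a right-angled hexagon, rather than a purely combinatorial one. I would handle this by appealing to the right-angled geometry: the seams meet the boundary orthogonally, and a geodesic arc that crosses two seams bounding a single boundary edge would have to turn too sharply, contradicting that it is a geodesic, or else it can be absorbed into an adjacent run by a further homotopy. Establishing this cleanly — and verifying that the resulting homotopies do not disturb properties (1) and (2) — is where the real work lies; the remaining steps are essentially bookkeeping about concatenations of arcs.
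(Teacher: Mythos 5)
Your overall skeleton — cut $\gamma$ at its seam crossings into arcs lying in single hexagons, replace each arc by a boundary path of the hexagon homotopic to it rel endpoints (the "short" side, containing one boundary edge), and concatenate — is exactly the paper's construction of the intermediate curve $p'(\gamma)$. One point you gloss over: the cut points are interior points of seam edges, so your concatenation consists of full boundary edges together with \emph{partial} seam edges, and is therefore not yet a concatenation of edges in $\E$. One must check that two consecutive partial seam edges either cancel (when their concatenation is null-homotopic rel endpoints) or unite into a full seam edge; this is the paper's Move 1 (Figure \ref{fig:LastPush}), and it is a small but necessary step before property (2) holds.

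The genuine gap is in your treatment of property (3). You hope that the right-angled geometry rules out isolated boundary edges ("the geodesic would have to turn too sharply"), but this is false. The hexagon is convex, so for any two points on the two seam edges adjacent to a common boundary edge $x$ there is a geodesic chord joining them; such a segment makes no sharp turn, and when its neighboring segments project onto the other hexagon (so that the adjacent partial seam edges complete to full seam edges rather than cancel), the projection contains a subarc $y_2 \circ x \circ y_3$ with $x$ isolated — exactly the configuration in Figure \ref{fig:LastHHomotopy}. So isolation cannot be excluded; it must be removed by a further homotopy, which is the paper's Move 2: push $y_2 \circ x_1 \circ y_3$ across its hexagon to the opposite side $x_3 \circ y_1 \circ x_2$. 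The subtle point, which your "absorbed into an adjacent run" remark does not supply, is termination: Move 2 trades one boundary edge for two, so a priori it could create new isolated edges forever. The paper rules this out by observing that the projected curve never runs along more than three consecutive sides of one hexagon, which forces the edges preceding $y_2$ and following $y_3$ to be boundary edges of the \emph{other} hexagon lying on the same boundary components as $x_3$ and $x_2$; hence after the move $x_2$ and $x_3$ are concatenated to boundary edges, the number of isolated edges strictly decreases, and the process stops after finitely many steps. Supplying this move and its termination argument is the real content of the lemma, and it is missing from your proposal.
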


\begin{proof}
 
Let $\gamma$ be an oriented, closed geodesic. The idea of the construction is given in the following four steps. See Figure \ref{fig:CurveProjection} for the accompanying illustration.
\begin{figure}[h!] \centering
 \includegraphics{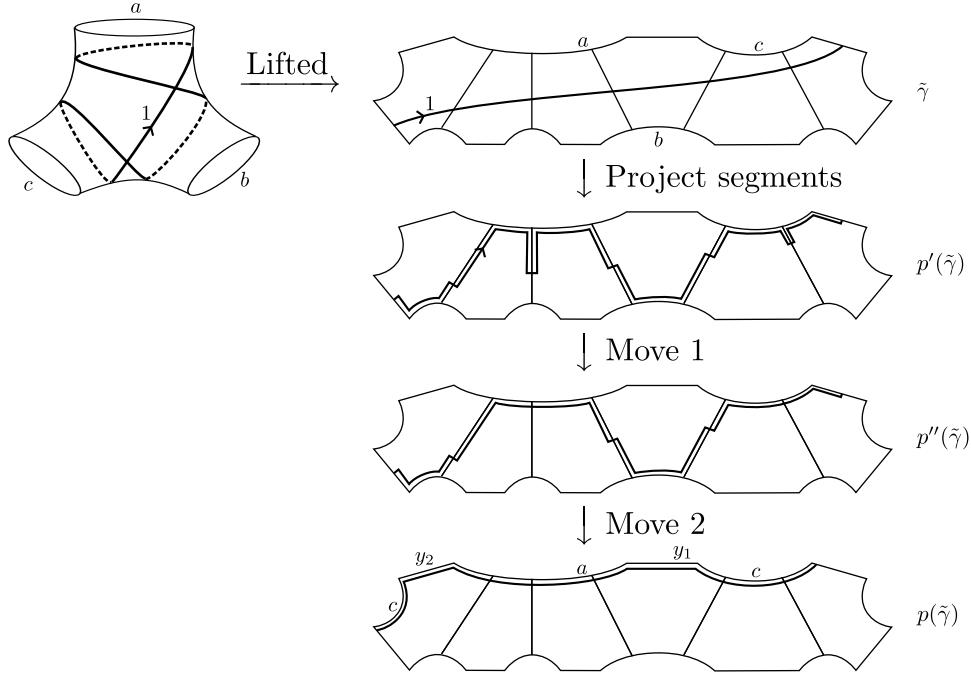}
\caption[Projection of a closed curve]{Projection of a closed curve. From top to bottom, we construct $\gamma$, $p'(\gamma)$, $p''(\gamma)$ and then finally $p(\gamma)$.}
\label{fig:CurveProjection}
\end{figure} 

\begin{enumerate}
 \item We break $\gamma$ up into segments, which are pieces of $\gamma$ that live entirely inside hexagons. 
 \item Each segment $\sigma$ lying inside a hexagon $h$ gets projected to a sub-arc $p'(\sigma)$ of the boundary of $h$ (Figure \ref{fig:SegmentHomotopy}). We can concatenate the arcs $p'(\sigma)$ to get a closed curve $p'(\gamma)$, which lies entirely in the boundaries of the two hexagons and is homotopic to $\gamma$. At this stage, $p'(\gamma)$ need not be the concatenation of edges in $\E$.
 \item We define a homotopy called Move 1 that we apply to finitely many disjoint sections of $p'(\gamma)$. The result is a curve $p''(\gamma)$ that is the concatenation of edges in $\E$.
 \item Lastly, we force each boundary edge to be concatenated to another boundary edge via a homotopy called Move 2, which we apply to sections of $p''(\gamma)$. This gives us a closed curve $p(\gamma)$ satisfying Lemma \ref{lem:ProjectionConstruction}.
\end{enumerate}

Now we fill in the details. Let a \textbf{segment} $\sigma$ of $\gamma$ be a maximal sub-arc that lies entirely in some hexagon $h$ of the hexagon decomposition of $\p$. The projection $p'(\sigma)$ of $\sigma$ is the shortest sub-arc of the boundary of $h$ that has the same endpoints as $\sigma$ and contains exactly one boundary edge. If the boundary edge is $x$, we will say that $\sigma$ is projected onto $x$ (Figure \ref{fig:SegmentHomotopy}.)

Since $\sigma$ and $p'(\sigma)$ have the same endpoints, and since $\gamma$ is the concatenation of segments, we can concatenate all of the arcs $p'(\sigma)$ into a closed curve $p'(\gamma)$. By construction, $p'(\gamma)$ is homotopic to $\gamma$.

\begin{figure}[h!] \centering
	\centering
		\includegraphics{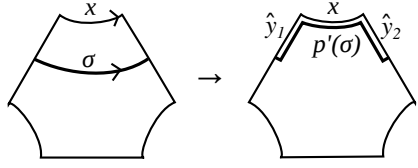}
	\caption[Projection of a segment]{Projecting a segment $\sigma$ onto the boundary edge $x$.}
	\label{fig:SegmentHomotopy}
\end{figure}

\textbf{Move 1}: The goal is to homotope $p'(\gamma)$ into a curve $p''(\gamma)$ that is the concatenation of edges in $\E$. This is needed in the situation in Figure \ref{fig:LastPush}.

\begin{figure}[h!] \centering
	\centering
		\includegraphics{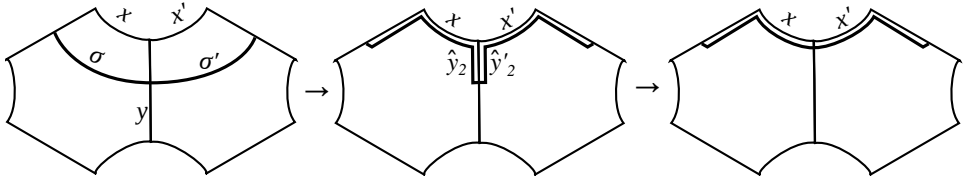}
	\caption[Move 1]{Move 1 is needed when $\sigma$ and $\sigma'$ project onto the same boundary component of $\p$.}
	\label{fig:LastPush}
\end{figure} 

\begin{figure}[h!] \centering
	\centering
		\includegraphics{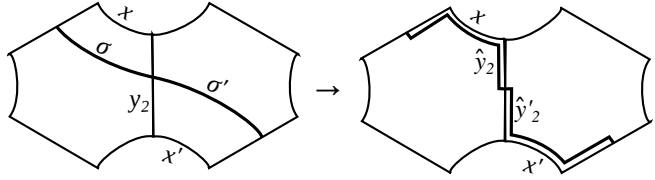}
	\caption[Move 1 not needed]{When $\sigma$ and $\sigma'$ project onto different boundary components of $\p$, Move 1 is not needed.}
	\label{fig:DifferentParallelSides}
\end{figure}

The orientation of $\gamma$ gives a cyclic ordering to its segments. Suppose segment $\sigma$ is followed by segment $\sigma'$. Write their projections as $p'(\sigma) = \hat y_1 \circ x \circ \hat y_2$ and $p'(\sigma') = \hat y_2' \circ x' \circ \hat y_3'$, where $x$ and $x'$ are boundary edges and $\hat y_i, \hat y_i'$ are pieces of the seam edge $y_i$ for each $i = 1,2,3$.  Note that because $\sigma$ and $\sigma'$ are consecutive segments, $\hat y_2$ and $\hat y_2'$ are both pieces of the same seam edge $y_2$. Furthermore, the endpoint of $\hat y_2$ is the start point of $\hat y_2'$. Thus the the concatenation $\hat y_2 \circ \hat y_2'$ is either null-homotopic relative to its endpoints (Figure \ref{fig:LastPush}) or it is all of $y_2$ (Figure \ref{fig:DifferentParallelSides}.)

Move 1 is to homotope away concatenations of the form $\hat y \circ \hat y'$ when they are null-homotopic. We apply it finitely many times to $p'(\gamma)$ to get a new closed curve $p''(\gamma)$ that is a concatenation of edges in $\E$. In fact, the number of times we must apply Move 1 is at most the number of segments in $\gamma$. Note that $p''(\gamma)$ is still homotopic to $\gamma$.

\textbf{Move 2}: 
\begin{figure}[h!] \centering
	\centering
		\includegraphics{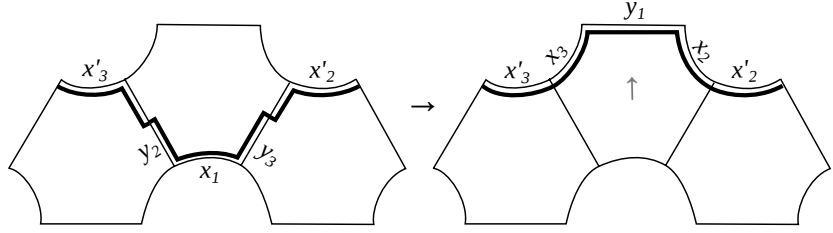}
	\caption[Move 2]{Move 2. The boundary edge $x_1$ on the left is isolated, while the boundary edges $x_2$ and $x_3$ on the right are not.}
	\label{fig:LastHHomotopy}
\end{figure}
Now we homotope $p''(\gamma)$ to a new curve $p(\gamma)$ in which each boundary edge is concatenated to another boundary edge (Figure \ref{fig:LastHHomotopy}.) If a boundary edge $x$ is not concatenated to any other boundary edge, we call $x$ an \textbf{isolated boundary edge}. 

Note that $p'(\gamma)$ never has more than 3 consecutive edges lying on the boundary of the same hexagon. Let $p''_1$ be any closed concatenation of edges in $\E$ with this property. We claim that we can homotope $p''_1$ to a curve $p''_2$ with strictly fewer isolated boundary edges.

If $p''_1$ has an isolated boundary edge $x_1$, then it has a subarc of the form $y_2 \circ x_1 \circ y_3$ lying on the boundary of a single hexagon $h$, where $y_2$ and $y_3$ are seam edges. We will homotope it relative its endpoints to the other part of the boundary of $h$. This is an arc of the form $x_3 \circ y_1 \circ x_2$, where $x_3$ and $x_2$ are boundary edges and $y_1$ is a seam edge:
\[
 y_2 \circ x_1 \circ y_3 \mapsto x_3 \circ y_1 \circ x_2
\]
This is \textbf{Move 2} (Figure \ref{fig:LastHHomotopy}). It gives us a new arc $p''_2$.

We claim that $p''_2$ has at least one fewer isolated boundary edge than $p''_1$. This is the same as showing that $x_2$ and $x_3$ are not isolated in $p''_2$. We have that $p''_1$ never follows more than three consecutive sides of a hexagon at a time. So $y_2$ and $y_3$ must be concatenated in $p''_1$ to edges in the other hexagon. These can only be the boundary edges $x_2'$ and $x_3'$ which lie on the same boundary components as $x_2$ and $x_3$, respectively. Thus $x_2$ and $x_3$ are not isolated.

Therefore, $p''_2$ has strictly fewer isolated boundary edges than $p''_1$. Since $p''(\gamma)$ has finitely many (isolated) boundary edges, we can perform Move 2 finitely many times to get a closed curve $p(\gamma)$ with no isolated boundary edges.

\begin{rem}
\label{rem:Move2Dependence}
Applying Move 2 can get rid of more than one isolated boundary edge at a time. Thus the final arc $p(\gamma)$ depends on the order in which we get rid of isolated boundary edges. For each closed geodesic $\gamma$, we make a choice of $p(\gamma)$ once and for all.
\end{rem}

\end{proof}


\subsubsection{Defining the Cyclic Word $w(\gamma)$ for a Closed Geodesic $\gamma$}
\label{sec:WofGamma}
By Remark \ref{rem:Move2Dependence}, we force the map $\gamma \mapsto p(\gamma)$ to be well-defined. Since $p(\gamma)$ is a concatenation of edges in $\E$, it corresponds to a cyclic word $w(\gamma)$ with letters in $\E$. Thus each $\gamma \in \G^c$ corresponds to a unique word $w(\gamma)$.

We show in Lemma \ref{lem:MostlyTwistWords} that $w(\gamma) \in \W$, where $\W$ is the set defined in Definition \ref{defi:W}.

\subsubsection{Injective Correspondence for Closed Geodesics}
\label{sec:InjectiveCorrespondence}

The most important relationship between closed geodesics in $\G^c$ and words in $\W$ is that distinct geodesics correspond to different words. This is because $\gamma$ is homotopic to $p(\gamma)$, so if two geodesics correspond to the same word, they are homotopic as well. Since there is exactly one geodesic in each free homotopy class of non-trivial closed curves, the map from closed geodesics to words is injective. We formalize this in the following remark.
\begin{rem}
 \label{rem:InjectiveCorrespondence}
 If $\gamma \neq \gamma'$ are two distinct closed geodesics on $\p$ then $w(\gamma) \neq w(\gamma')$.
\end{rem}



\subsection{Word Structure: Boundary Subwords}
\label{sec:WordStructure}

We want to examine the form of a word $w(\gamma)$ in more detail. A closed geodesic $\gamma \in \G^c$ spends most of its time twisting about boundary components of $\p$. So its projection $p(\gamma)$ spends most of its time winding around those boundary components. Note that to transition from one boundary component to another, $p(\gamma)$ only needs to take a single seam edge (Figure \ref{fig:ProjectionStructure}.) Thus, $w(\gamma)$ has long sequences of boundary edges (called boundary subwords) separated by single seam edges (Lemma \ref{lem:MostlyTwistWords}.) Furthermore, $w(\gamma)$ is completely determined by the sequence of boundary subwords that appear (Lemma  \ref{lem:WordDetermined}.) This is because there is at most one seam edge connecting one boundary edge to another.
\begin{figure}[h!] \centering
 \includegraphics{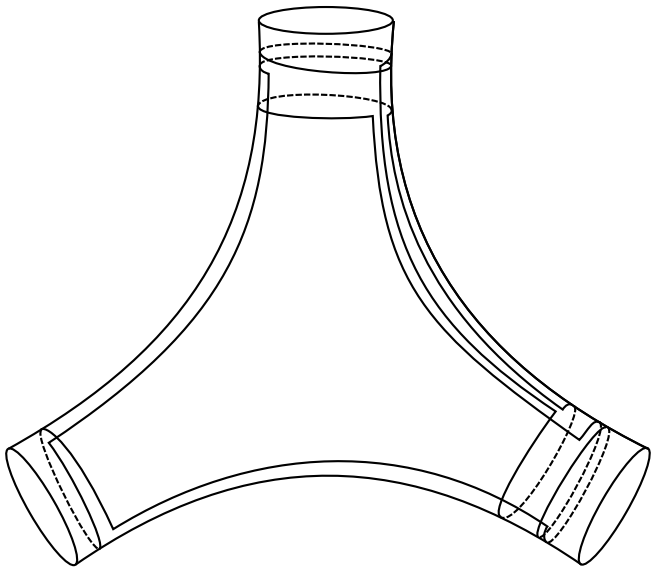}
 \caption{An approximation to $p(\gamma)$}
 \label{fig:ProjectionStructure}
\end{figure}

\begin{lem}
\label{lem:MostlyTwistWords}
For each $\gamma \in \G^c$, the word $w(\gamma)$ is in $\W$, where $\W$ is the set defined in Definition \ref{defi:W}.
\end{lem}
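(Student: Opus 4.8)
The plan is to verify, one at a time, the three defining conditions of $\W$ from Definition \ref{defi:W} for the word $w(\gamma)$ attached to an arbitrary $\gamma \in \G^c$. The first condition comes essentially for free: by parts (1) and (2) of Lemma \ref{lem:ProjectionConstruction}, $p(\gamma)$ is a closed curve that is a concatenation of edges in $\E$, and $w(\gamma)$ is by definition the cyclic word read off from this concatenation, so its letters concatenate (in order) into a closed path. The substance of the lemma lies in the block-structure condition and the no-back-tracking condition, and I would organize the whole argument around a single combinatorial fact about the hexagon decomposition.

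That fact is a vertex count. The decomposition of $\p$ into two right-angled hexagons has exactly six vertices: each of the three seams contributes its two endpoints, and dually each of the three boundary geodesics is cut into two boundary edges by two vertices. At each such vertex exactly one seam edge and two boundary edges meet. I would deduce the block structure from this as follows. Suppose the path traverses a seam edge and arrives at a vertex $v$; the only edges emanating from $v$ other than the reversal of that seam are the two boundary edges at $v$, so the next edge of $p(\gamma)$ must be a boundary edge. Hence no two seam edges are ever consecutive, every seam edge contributes a block with $|s_i| = 1$, and $w(\gamma)$ breaks into maximal boundary blocks separated by single seam edges. Property (3) of Lemma \ref{lem:ProjectionConstruction} — each boundary edge abuts another boundary edge — then forces each such block to have length at least $2$, giving $|b_i| \geq 2$. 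Finally, since every seam edge joins two \emph{distinct} boundary components of $\p$, a cyclic word consisting of a single boundary block can contain no seam edge (it would have to join a component to itself); this is exactly the degenerate case $w = b_1$, and otherwise we obtain the stated form $w = b_1 s_1 \cdots b_n s_n$.

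The remaining, and I expect most delicate, point is the no-back-tracking condition, since Lemma \ref{lem:ProjectionConstruction} does not assert it directly. A back-track is a letter immediately followed by its orientation-reverse. At a seam transition this is impossible, because the two edges flanking a seam edge are of opposite type (boundary versus seam) and so cannot be mutual reverses; thus the only place a back-track could occur is inside a boundary block, as a pair $x\,\bar x$ on a single boundary circle. I would rule this out by tracking the construction in Lemma \ref{lem:ProjectionConstruction}: the segments of the geodesic $\gamma$ wind \emph{monotonically} around each boundary component, each segment projects to an arc traversing one boundary edge in that same rotational direction, and Moves 1 and 2 only delete null-homotopic seam bigons or swap a three-edge arc along a hexagon boundary for the complementary three-edge arc — neither of which reverses the direction of travel along a boundary circle. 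The main obstacle is therefore to check carefully that Move 2, which replaces $y_2 \circ x_1 \circ y_3$ by $x_3 \circ y_1 \circ x_2$, preserves the winding orientation and creates no spur where the new boundary edges $x_2, x_3$ meet the surrounding block; once this is verified, the no-back-tracking condition follows and the proof is complete.
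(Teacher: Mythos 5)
Most of your proposal coincides with the paper's own proof: the closed-path condition is read off from Lemma \ref{lem:ProjectionConstruction}, the bound $|b_i| \geq 2$ comes from property (3) of that lemma, the fact that only one seam edge meets each vertex of the hexagon decomposition is what forces $|s_i| = 1$, and the $n=1$ case is handled by observing that a seam edge joins distinct boundary components while a boundary block stays on one. The divergence is in the no-back-tracking condition, and there your argument has a genuine gap --- in fact a circularity. To conclude that ``the next edge of $p(\gamma)$ must be a boundary edge'' after a seam edge, you must already know that the path does not double back along the seam it just traversed: the reversal of that seam is one of the three edges available at $v$, and nothing in the vertex count excludes it (a concatenation of edges in $\E$ is perfectly free to back-track; this is a property of the particular curve $p(\gamma)$, not of the graph of edges). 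You rule out precisely this possibility only later, by appealing to the block structure (``the two edges flanking a seam edge are of opposite type''), but the block structure is what the excluded possibility was used to prove. So the seam case of no-back-tracking and the block structure each rest on the other.

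The second half of the gap is that the within-block case, which you correctly see must come from the construction itself (monotone winding of the segments, and the effect of Moves 1 and 2), is explicitly deferred: you name the verification that Move 2 creates no spur as ``the main obstacle'' and stop there, so the proposal is a plan rather than a proof at exactly the point you yourself call the most delicate. For comparison, the paper resolves this in the opposite logical order: it takes as given that the curve $p(\gamma)$ produced by the construction in Lemma \ref{lem:ProjectionConstruction} does not back-track (the ``by construction'' sentence that opens its proof --- you are right that the statement of that lemma does not list this property, so the appeal is to the construction, not to the stated conclusions), and only then uses the vertex count in the valid direction: no back-tracking plus uniqueness of the seam edge at each vertex implies every seam edge is followed by a boundary edge, hence $|s_i| = 1$. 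The repair for your write-up is the same reordering: establish no-back-tracking first, entirely from the construction of $p(\gamma)$ (including the Move 2 analysis you postponed), and derive the block decomposition of Definition \ref{defi:W} afterwards.
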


\begin{proof}
Let $\gamma \in \G^c$. By construction, $w(\gamma)$ can be concatenated into a closed curve $p(\gamma)$, and this curve $p(\gamma)$ does not back-track. We just need to show that 
\[
 w(\gamma) = b_1s_1 \dots b_ns_n
\]
where $b_i$ consists only of boundary edges, with $|b_i| \geq 2$ and $s_i$ is a single seam edge for each $i$, unless $n = 1$, in which case $w(\gamma) = b_1$.

Note that $w(\gamma)$ can alway be written to start with a boundary edge and end with a seam edge, unless $w(\gamma)$ consists only of boundary edges. (By construction, $w(\gamma)$ always contains at least 1 boundary edge.) So we can always write
\[
 w(\gamma) = b_1s_1 \dots b_n s_n
\]
where $b_i$ is a non-empty sequence of boundary edges, and where $s_i$ is a non-empty sequence of seam edges for each $i$, unless $w(\gamma) = b_1$. 

The condition that each boundary edge is concatenated to another boundary edge guarantees that $|b_i| \geq 2$ for each $i$. The fact that $p(\gamma)$ does not back-track guarantees that each seam edge can only be concatenated to a boundary edges. So $|s_i| = 1$ for each $i$. 


If $n = 1$, we want to show that $w(\gamma) = b_1$. But a (non-cyclic) word of the form $b_1s_1$ has endpoints on different boundary components of $\p$, and so does not close up into a closed curve. So if $n = 1$, then $w(\gamma) = b_1$.


\end{proof}

We now get a few more properties of the structure of $w(\gamma)$.

\begin{lem}
\label{lem:WordDetermined}
 A cyclic word $w(\gamma)$ is completely determined by its sequence $b_1, \dots, b_n$ of boundary subwords. That is, if $w(\gamma) = b_1 s_1 \dots b_n s_n$ and $w(\gamma') = b_1 s'_1 \dots b_n s'_n$ (where the boundary subwords of $w$ and $w'$ are the same and in the same order), then $s_i = s_i'$ for each $i$.
\end{lem}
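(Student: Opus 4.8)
The plan is to show that the seam edges $s_i$ connecting consecutive boundary subwords $b_i$ and $b_{i+1}$ are forced by the boundary subwords themselves, so that no choice remains once the sequence $b_1, \dots, b_n$ is fixed. The key geometric fact I would invoke is the one already flagged in the discussion of Section \ref{sec:WordStructure}: there is at most one seam edge in $\E$ connecting a given boundary edge to another given boundary edge. I would first make precise what ``connecting'' means here by tracking endpoints. Each seam edge $s_i$ is a single oriented edge (by Lemma \ref{lem:MostlyTwistWords}, $|s_i| = 1$), and because $p(\gamma)$ is a closed concatenation with matching endpoints, the start point of $s_i$ must coincide with the terminal endpoint of the last letter of $b_i$, and the terminal endpoint of $s_i$ must coincide with the start point of the first letter of $b_{i+1}$ (indices cyclic).

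First I would fix attention on a single index $i$ and consider the last boundary edge $x$ of $b_i$ and the first boundary edge $x'$ of $b_{i+1}$. Since the two words $w(\gamma)$ and $w(\gamma')$ have identical boundary subwords in identical order, these edges $x$ and $x'$ are the same oriented edges for both words, so in particular their relevant endpoints agree. Both $s_i$ and $s_i'$ are then oriented seam edges whose start point is the terminal endpoint of $x$ and whose terminal endpoint is the start point of $x'$. So it suffices to prove the claim that an oriented seam edge in $\E$ is uniquely determined by its ordered pair of endpoints, or at least that at most one oriented seam edge begins at the terminal endpoint of $x$ and ends at the start point of $x'$.

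To verify this uniqueness I would argue directly from the hexagon decomposition. The seam edges $y_1, \dots, y_6$ are two oriented copies of the three seams of the hexagon decomposition, and each seam runs between a determined pair of vertices (the right-angled corners) on the hexagon boundary. The boundary edges $x$ and $x'$ are determined sides of the hexagons, so their endpoints are specific vertices; a seam edge starting at a given vertex and ending at a given vertex is then one of the finitely many seams, and the orientation is pinned down by which endpoint is the start. Running through the (finitely many) vertex-to-vertex possibilities, I would confirm that at most one element of $\E$ realizes each ordered pair, which gives $s_i = s_i'$. Repeating over all $i$ (and handling the trivial case $n = 1$, where there are no seam edges and the claim is vacuous) completes the proof.

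The step I expect to be the main obstacle is the uniqueness claim itself: one must be careful that the endpoint data really does pin down the seam edge, including its orientation, and that I have correctly identified which vertices the boundary edges $x$ and $x'$ terminate at. In principle two distinct seams could share a pair of vertices, so the argument genuinely relies on the combinatorics of the right-angled hexagon decomposition rather than on a soft counting statement, and I would want to lean on the no-back-tracking property of $p(\gamma)$ to rule out the orientation-reversed copy of a seam being an admissible choice for $s_i$.
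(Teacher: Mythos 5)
Your proposal is correct and follows essentially the same route as the paper: both arguments reduce to the fact that at most one oriented seam edge in $\E$ can be concatenated between the last boundary edge of $b_i$ and the first boundary edge of $b_{i+1}$, forcing $s_i = s_i'$. Your endpoint-tracking verification of that uniqueness fact (via the six distinct seam endpoints of the hexagon decomposition) simply fills in a detail the paper asserts without proof.
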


\begin{proof}
This lemma follows from the fact that any two boundary subwords can be joined together by at most one seam edge. In particular, given two oriented boundary edges $x$ and $x'$ that lie on different boundary components of $\p$, there is at most one oriented seam edge $y$ such that we can concatenate them into an oriented arc $x \circ y \circ x'$. 

We assume that $w$ and $w'$ come from closed geodesics $\gamma$ and $\gamma'$. If $x$ is the last boundary edge in the subword $b_i$ and $x'$ is the first boundary edge in the subword $b_{i+1}$ then the existence of $p(\gamma)$ implies that there does exist a seam edge $y$ such that $x \circ y \circ x'$ is an oriented arc. Thus $s_i = s'_i$ for each $i$. (Note that we number the boundary subwords modulo $n$, so $b_{n+1} =b_1$.)
\end{proof}

Lastly, we note that because each $\gamma \in \G^c$ is primitive, so is the word $w(\gamma)$.
\begin{lem}
\label{lem:PrimitiveWords}
 Let $w' = w(\gamma')$. Suppose there exists a word $w$ in the edges in $\E$ such that 
 \[
  w' = w^n
 \]
for $n >1$. Then $\gamma'$ is not primitive.
\end{lem}

\begin{proof}
Suppose $w = b_1 s_1 \dots b_n s_n$ where all the subwords except for possibly $s_n$ are non-empty. Then the fact that $w' = b_1 s_1 \dots b_n s_n b_1 \dots$ implies that the concatenation $b_n \circ s_n \circ b_1$ corresponds to an oriented path in the edges of $\E$. Thus we can concatenate the edges in $w$ into a closed path $p$. If $p'$ is the closed curve corresponding to $w'$, then $p' = p^n$. Every closed curve has a unique closed geodesic in its free homotopy class. Let $\gamma$ be the geodesic in the free homotopy class of $p$. Then $p' = p^n$ implies that $\gamma' = \gamma^n$. Therefore $\gamma'$ is not 
primitive.
\end{proof}

\subsection{Word and Geodesic Lengths}
\label{sec:WordAndGeodesicLengths}

The word $w(\gamma)$ encodes geometric properties of $\gamma$ for each $\gamma \in \G^c$. For example, we get the following relationship between the length of a closed geodesic $\gamma$ and the word length of $w(\gamma)$.

\begin{lem}
\label{lem:GeodesicAndWordLength}
Let $\gamma \in \G^c$. If $|w(\gamma)|$ is the word length of $w(\gamma)$, then 
\[
\frac {1}{3} l_{min} |w(\gamma)| \leq l(\gamma) \leq l_{max} |w(\gamma)|
\]
where $l_{min}$ is the length of the shortest boundary edge, and $l_{max}$ is the length of the longest boundary or seam edge in $\E$.
\end{lem}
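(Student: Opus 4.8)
The plan is to prove the two inequalities separately, since each one captures a different geometric comparison. Throughout I will use the structure of $w(\gamma)$ established in Lemmas \ref{lem:ProjectionConstruction} and \ref{lem:MostlyTwistWords}: the word $w(\gamma)$ is the cyclic word associated to the closed curve $p(\gamma)$, which is freely homotopic to $\gamma$ and is a concatenation of exactly $|w(\gamma)|$ edges in $\E$. The key observation tying word length to geodesic length is that $p(\gamma)$ and $\gamma$ are freely homotopic, so $\gamma$ is the geodesic representative of the homotopy class of $p(\gamma)$, and hence $l(\gamma) \leq l(p(\gamma))$ since the geodesic minimizes length in its free homotopy class.

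For the \textbf{upper bound} $l(\gamma) \leq l_{max}|w(\gamma)|$, I would first bound $l(p(\gamma))$ directly. The curve $p(\gamma)$ is a concatenation of $|w(\gamma)|$ edges, each of which is either a boundary edge or a seam edge of the hexagon decomposition. Each such edge has length at most $l_{max}$, the length of the longest boundary or seam edge. Therefore $l(p(\gamma)) \leq l_{max}|w(\gamma)|$. Since $\gamma$ is the geodesic in the free homotopy class of $p(\gamma)$, we conclude $l(\gamma) \leq l(p(\gamma)) \leq l_{max}|w(\gamma)|$, as desired. This direction is essentially a one-line triangle-inequality argument once the homotopy relationship is in hand.

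The \textbf{lower bound} $\frac{1}{3}l_{min}|w(\gamma)| \leq l(\gamma)$ is the harder direction and is where I expect the main obstacle to lie, because now I need to show the geodesic cannot be much \emph{shorter} than the word would suggest — and a priori the homotopy could collapse a long word into a very short geodesic. The idea is to exploit the structure from Lemma \ref{lem:MostlyTwistWords}: write $w(\gamma) = b_1 s_1 \cdots b_n s_n$ where each $b_i$ is a boundary subword of length $\geq 2$ and each $s_i$ is a single seam edge. The seam edges are at most a $1/3$ fraction of the letters (since between consecutive seams there are at least two boundary edges), so at least $\frac{1}{3}|w(\gamma)|$ of the letters are boundary edges, and I would try to argue that $\gamma$ must traverse a definite amount of length for each boundary edge the projection winds around a boundary component of $\p$. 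Concretely, the boundary subwords record how many times $\gamma$ wraps near each cuff of the pants, and each such boundary edge contributes at least $l_{min}$ to a lower bound on the geodesic length; the factor $\frac{1}{3}$ accounts for discarding the seam contributions and for the boundary-edge counting.

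The crux of the lower bound will be justifying that winding captured by boundary edges cannot be undone by the geodesic — that is, a geodesic freely homotopic to a curve that winds $m$ times around a collar must itself have length at least proportional to $m\,l_{min}$. I would establish this either by comparing $\gamma$ against the core geodesics (boundary components) using the collar lemma, or by lifting to the universal cover $\h$ and noting that each boundary edge of $p(\gamma)$ forces $\gamma$'s lift to cross a definite family of lifted edges, so the lifted geodesic segment between corresponding endpoints has length bounded below by the displacement, which is at least $l_{min}$ per boundary edge up to the constant $\frac{1}{3}$. If a cleaner combinatorial argument is available — for instance bounding the number of edges of $\E$ that a geodesic arc of a given length can cross — I would prefer that, since it keeps everything intrinsic to the hexagon decomposition and avoids delicate hyperbolic trigonometry. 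Once the per-boundary-edge length bound is secured, summing over the at least $\frac{1}{3}|w(\gamma)|$ boundary edges yields $l(\gamma) \geq \frac{1}{3}l_{min}|w(\gamma)|$ and completes the proof.
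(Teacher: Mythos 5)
Your upper bound is exactly the paper's argument: each of the $|w(\gamma)|$ edges of $p(\gamma)$ has length at most $l_{max}$, and $l(\gamma) \leq l(p(\gamma))$ because the geodesic minimizes length in its free homotopy class. That half is complete.

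The lower bound, however, has a genuine gap, and it is precisely at the step you yourself flag as the ``crux.'' You propose to charge at least $l_{min}$ of geodesic length to each \emph{boundary edge of the projection} $p(\gamma)$, and you offer two possible routes (collar lemma, or displacement of lifts) without carrying either out. Neither route can work in the per-boundary-edge form you state, because there is no one-to-one correspondence between boundary edges of $p(\gamma)$ and pieces of $\gamma$: in the construction of $p(\gamma)$ (Lemma \ref{lem:ProjectionConstruction}), Move 2 replaces a single isolated boundary edge by \emph{two} boundary edges, so the number of boundary edges $m$ can be as large as $2n$, where $n$ is the number of segments of $\gamma$. Indeed, if your per-edge claim were true, then combined with the fact that boundary edges make up at least $2/3$ of the letters (not just $1/3$, as you write), it would yield $l(\gamma) \geq \tfrac{2}{3} l_{min}|w(\gamma)|$ --- a stronger bound that the construction does not support. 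The paper avoids this mismatch by working with $\gamma$ itself rather than its projection: a \emph{segment} of $\gamma$ (a maximal subarc inside one hexagon) has endpoints on two distinct seam edges, and those two seam edges meet a common boundary edge $x$ of that hexagon at right angles; since the common perpendicular realizes the distance between two disjoint geodesics, any arc joining the two seams has length at least $l(x) \geq l_{min}$, so $l(\gamma) \geq l_{min} n$. The counting then goes: $m \geq \tfrac{2}{3}|w(\gamma)|$ (seam edges are never adjacent and boundary edges come in pairs), and $n \geq \tfrac{1}{2} m$ (Move 2 at worst doubles), giving $n \geq \tfrac{1}{3}|w(\gamma)|$ and hence the stated bound. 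Note that the factor $\tfrac{1}{3}$ is not, as in your accounting, the fraction of letters that are boundary edges; it is the product of the fraction $\tfrac{2}{3}$ with the loss of $\tfrac{1}{2}$ coming from Move 2. To repair your proof you would need to (i) replace the per-boundary-edge charge by a per-segment charge, (ii) prove the segment length bound via the common-perpendicular fact above, and (iii) track the Move 2 doubling explicitly --- at which point you have reproduced the paper's argument.
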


\begin{proof}
Let $\gamma \in \G^c$ and let $w(\gamma) \in \W$ be the associated word. Let $p(\gamma)$ be the closed curve corresponding to $w(\gamma)$. Throughout this proof, we will use that the number of edges in $p(\gamma)$ is exactly the word length of $w(\gamma)$.

To get the upper bound, we use that $l(\gamma) \leq l(p(\gamma))$ since a geodesic is the shortest curve in its free homotopy class. Thus, if $l_{max}$ is the length of the longest edge in $\E$, then
\[
 l(\gamma) \leq l_{max} |w(\gamma)|
\]

To get the lower bound, set $n$ to be the number of segments in $\gamma$. We first compare $n$ to $|w(\gamma)|$. Let $m$ to be the number of boundary edges in $p(\gamma)$. Note that $m \leq 2n$. To see this, look at the construction of $p(\gamma)$ in the proof of Lemma \ref{lem:ProjectionConstruction}. Each segment $\sigma$ of $\gamma$ got projected onto a single boundary edge, which then may have been replaced by two boundary edges when we did Move 2. Thus, the boundary edges in $p(\gamma)$ account for at most two times the number of segments in $\gamma$.

Two seam edges are never concatenated together and boundary edges appear in consecutive pairs. Thus, as least 2/3 of all edges in $p(\gamma)$ are boundary edges. In other words, $\frac 23 |w(\gamma)| \leq m$. Therefore,
\[
 \frac 2 6 |w(\gamma)| \leq \frac 12 m \leq n
\]
where $\gamma$ has $n$ segments and $p(\gamma)$ has $m$ boundary edges. 

Suppose a segment $\sigma$ has endpoints on seam edges $y$ and $y'$. Because we broke $\p$ up into right angle hexagons, $y$ and $y'$ meet a common boundary edge $x$ at right angles. By some hyperbolic geometry, any arc connecting $y$ and $y'$ will thus be at least as long as $x$, i.e. $l(\sigma) \geq l(x)$. Thus, if $l_{min}$ is the length of the smallest boundary edge in $\E$, then $l_{min} \leq l(\sigma)$ for each segment $\sigma$. Therefore, $l_{min}n \leq l(\gamma)$. So we get the lower bound:
\[
 \frac 13 l_{min} |w(\gamma)| \leq l(\gamma)
\]
\end{proof}

\subsection{An intersection number for words}
We want a notion of word self-intersection number so that if $i(w(\gamma),w(\gamma)) \leq K$, then $i(\gamma, \gamma) \leq K$. 
\begin{defi}
 Let $b_i$ be a boundary subword of $w \in \W$. Suppose $\beta$ is a boundary component of $\p$. We write $b_i \subset \beta$ and say that \textbf{$b_i$ lies on $\beta$} if the boundary edges in $b_i$ lie on $\beta$.
\end{defi}

We define the self-intersection number of a word as follows.
\begin{defi}
\label{def:IntForWords}
Let $w = b_1s_1 \dots b_ns_n \in \W$. Let $\beta_1, \beta_2$ and $\beta_3$ be the boundary components of $\p$. Suppose $w$ has $n_j$ boundary subwords lying on $\beta_j$, $j = 1,2,3$. Let $\sigma_j : \{1, \dots, n_j\} \rightarrow \{1, \dots, n\}$ so that 
\begin{itemize}
 \item $b_{\sigma_j(1)}, \dots, b_{\sigma_j(n_j)} \subset \beta_j$
 \item $|b_{\sigma_j(1)}| \geq |b_{\sigma_j(2)}| \geq \dots \geq |b_{\sigma_j(n_j)}|$
\end{itemize}
Then, let
 \[
  i(w,w) = 2 \sum_{j = 1,2,3} \sum_{i = 1}^{n_j} i|b_{\sigma_j(i)}|
 \]
\end{defi}

In other words, given a word $w = b_1 s_1 \dots b_n s_n$, we group the boundary subwords $b_1, \dots, b_n$ according to the component of $\partial \p$ on which they lie, and then we order the boundary subwords in each group from largest to smallest word length. This gives us the re-indexing functions $\sigma_j$, $ j = 1,2,3$. Then we form the sum above.

\begin{example}[Computing word self-intersection number]
 Suppose
\[
 w = b_1 s_1 \dots b_5 s_5 \in \W
\]
is a word with $b_1, b_3, b_5 \subset \beta_1$ and $b_2, b_4 \subset \beta_2$. Suppose further that $|b_3| \geq |b_1| \geq |b_5|$ and $|b_2| \geq |b_4|$. Then
\[
 i(w,w) = 2 \Big (|b_3| + 2 |b_1| + 3 | b_5|\Big ) + 2\Big (|b_2| + 2 | b_4|\Big )
\]
\end{example}

 \begin{lem}
 \label{lem:IntersectionLowerBound}
  Suppose $w \in \W$ corresponds to the geodesic $\gamma \in \G^c$. Then
  \[
   i(\gamma, \gamma) \leq i(w,w)
  \]
\end{lem}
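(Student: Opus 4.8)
The plan is to bound the geometric self-intersection number $i(\gamma,\gamma)$ by counting, in the combinatorial model, all possible pairs of boundary subwords whose twisting around a common boundary component could force the geodesic $\gamma$ to cross itself. The key geometric intuition, stated in Section \ref{sec:WordStructure}, is that $\gamma$ spends most of its length twisting around the three boundary components $\beta_1,\beta_2,\beta_3$, and self-intersections are produced precisely when the curve twists around the same boundary component in several nearby strands. So I would first reduce the problem to understanding intersections \emph{within} a single boundary component $\beta_j$, and then sum the contributions over $j=1,2,3$.

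First I would set up the correspondence between boundary subwords of $w(\gamma)$ and the arcs of $\gamma$ (equivalently of $p(\gamma)$) that wind around each $\beta_j$. A boundary subword $b_i \subset \beta_j$ of length $|b_i|$ corresponds to an arc that winds around $\beta_j$ a number of times proportional to $|b_i|$, since each pair of consecutive boundary edges advances the curve by a bounded amount around the collar of $\beta_j$. The heart of the argument is a local claim: if two boundary subwords $b_i, b_k$ both lie on $\beta_j$, then the number of times the corresponding two arcs of $\gamma$ cross each other is at most $2\min\{|b_i|,|b_k|\}$ — the shorter winding arc can cross a longer, roughly parallel winding arc at most a bounded multiple of its own length number of times. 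I would prove this by lifting both arcs to the universal cover (or to the annular cover associated to $\beta_j$), where twisting strands become nearly-parallel geodesic arcs, and count crossings using the fact that two geodesics in $\h$ cross at most once, so the total crossing count is governed by the number of fundamental-domain translates the shorter arc traverses.

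Granting this local bound, the total self-intersection number is at most the sum, over each boundary component $\beta_j$ and over each unordered pair of subwords $b_{\sigma_j(i)}, b_{\sigma_j(k)}$ on $\beta_j$, of $2\min\{|b_{\sigma_j(i)}|,|b_{\sigma_j(k)}|\}$. Here is where the ordering in Definition \ref{def:IntForWords} does the bookkeeping: if the subwords on $\beta_j$ are arranged so that $|b_{\sigma_j(1)}| \geq \dots \geq |b_{\sigma_j(n_j)}|$, then for the $i$-th subword the quantity $\min\{|b_{\sigma_j(i)}|,|b_{\sigma_j(k)}|\}$ equals $|b_{\sigma_j(i)}|$ for each of the $i-1$ longer subwords and each crossing with $b_{\sigma_j(i)}$ is charged at most $|b_{\sigma_j(i)}|$, so summing $\sum_k 2\min\{\cdots\}$ reorganizes exactly into $2\sum_{i} i\,|b_{\sigma_j(i)}|$, matching the definition of $i(w,w)$. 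I would also account for self-crossings produced by a single long subword winding against itself, absorbing these into the same weighted sum.

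The main obstacle I expect is making the local crossing claim rigorous, specifically controlling crossings between \emph{non-parallel} strands and crossings occurring near the seam edges where the curve transitions between boundary components. A subword winding around $\beta_j$ is only \emph{approximately} geodesic in the model, and two such winding arcs need not be literally parallel, so I must argue that the homotopy from $p(\gamma)$ to $\gamma$ does not create more intersections than the combinatorial count permits; equivalently, that straightening $p(\gamma)$ to its geodesic representative $\gamma$ can only decrease, not increase, the number of crossings within each collar. This is plausible because geodesic representatives minimize self-intersection in their free homotopy class, but converting that principle into the precise inequality $i(\gamma,\gamma)\le i(w,w)$ — and verifying that the factor $2$ and the weighting by $i$ genuinely dominate every crossing, including those straddling different boundary components — is the delicate part of the proof.
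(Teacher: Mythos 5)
Your bookkeeping (charging each crossing to the shorter member of a pair of winding strands, then reorganizing via the length ordering into $2\sum_i i\,|b_{\sigma_j(i)}|$) is the right skeleton, and your closing appeal to the fact that geodesics minimize self-intersection number in their free homotopy class is the right global principle. But there are two genuine gaps. First, your sum runs only over pairs of subwords lying on the \emph{same} boundary component (plus self-crossings of a single strand), so it has no term covering crossings between strands that wind around \emph{different} boundary components, nor crossings involving the transition arcs near the seams. These are not exotic: the figure-eight geodesic, $w = b_1 s_1 b_2 s_2$ with $b_1 \subset \beta_1$ and $b_2 \subset \beta_2$, has exactly one self-intersection, and it is a crossing between the $\beta_1$-side and the $\beta_2$-side of the curve; neither loop crosses itself, so this point is invisible to a same-component pairwise count. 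Second, the localized straightening principle you lean on --- that homotoping $p(\gamma)$ to $\gamma$ ``can only decrease the number of crossings within each collar'' --- is not available: minimality of self-intersection for geodesics is a statement about the \emph{total} count over the free homotopy class, and crossings can migrate in and out of collars under homotopy. Proving your local min-bound directly for geodesic strands (via the annular cover) is real hyperbolic-geometry work that your outline defers rather than carries out.

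The paper's proof sidesteps both problems by never analyzing $\gamma$ at all. It constructs one explicit representative $\delta$ homotopic to $p(\gamma)$: for each subword $b_{\sigma_j(i)}$ an embedded cylinder $C_i^j$ around $\beta_j$, nested so that longer subwords sit closer to $\beta_j$; a spiral with $|b_{\sigma_j(i)}|$ half-twists inside each cylinder; and pairwise \emph{disjoint} connecting lines confined to disjoint seam neighborhoods. By construction, every self-intersection of $\delta$ is a crossing of a connecting line with a spiral inside some cylinder; the nesting forces the spiral in $C_i^j$ to meet at most $2i-1$ lines, each at most $\tfrac{1}{2}|b_{\sigma_j(i)}|+1$ times, and summing (using $|b_i| \geq 2$) gives $|\delta \cap \delta| \leq i(w,w)$. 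The global minimality principle is then invoked exactly once: $i(\gamma,\gamma) \leq |\delta \cap \delta|$. In particular, the cross-component crossings your accounting misses are, in $\delta$, forced to occur as line-versus-spiral crossings inside the cylinders, where the ordering by length charges them exactly the weight $i\,|b_{\sigma_j(i)}|$ appearing in Definition \ref{def:IntForWords}. To salvage your outline, replace ``analyze the strands of $\gamma$'' with ``build this representative''; your pairwise-min heuristic then becomes precisely the counting step of the paper's proof.
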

\begin{proof}
Let $w = b_1s_1 \dots b_n s_n$ be a cyclic word that corresponds to some closed geodesic $\gamma$. We will show how to construct a closed curve $\delta$ freely homotopic to $\gamma$ where
 \[
  |\delta \cap \delta | \leq i (w,w)
 \]

Let $\beta_1, \beta_2, \beta_3$ be the boundary components of $\p$. Suppose $w$ has $n_j$ boundary subwords lying on $\beta_j$, $j = 1,2,3$. Let $\sigma_j : \{1, \dots, n_j\} \rightarrow \{1, \dots, n\}$ so that 
\begin{itemize}
 \item $b_{\sigma_j(1)}, \dots, b_{\sigma_j(n_j)} \subset \beta_j$
 \item $|b_{\sigma_j(1)}| \geq |b_{\sigma_j(2)}| \geq \dots \geq |b_{\sigma_j(n_j)}|$
\end{itemize}
These are the reordering functions from Definition \ref{def:IntForWords}.

First, we construct a region of $\p$ homotopic to the one skeleton of its hexagon decomposition (Figure \ref{fig:SideEdgeRegions}). Let $R_1, R_2$ and $R_3$ be disjoint neighborhoods of the three seam edges. For each boundary subword $b_{\sigma_j(i)}$ lying on $\beta_j$, let $C_{i}^j$ be a cylinder embedded in $\p$ that is homotopic to $\beta_j$. Choose these cylinders so that any two cylinders $C_i^j$ and $C_k^l$ are pairwise disjoint. Lastly, choose $C_i^j$ to be closer to $\beta_j$ than $C_{i+1}^j$ for each $i,j$. The union of $R_1 \cup R_2 \cup R_3$ and the cylinders is homotopic in $\p$ to the union of hexagon edges. (See Figure \ref{fig:SideEdgeRegions}).

We now construct $\delta$ so that
\[
 \delta \subset \bigcup_{k=1,2,3} R_k \bigcup_{i,j}C_i^j
\]
 
\begin{figure}[h!] \centering
 \includegraphics{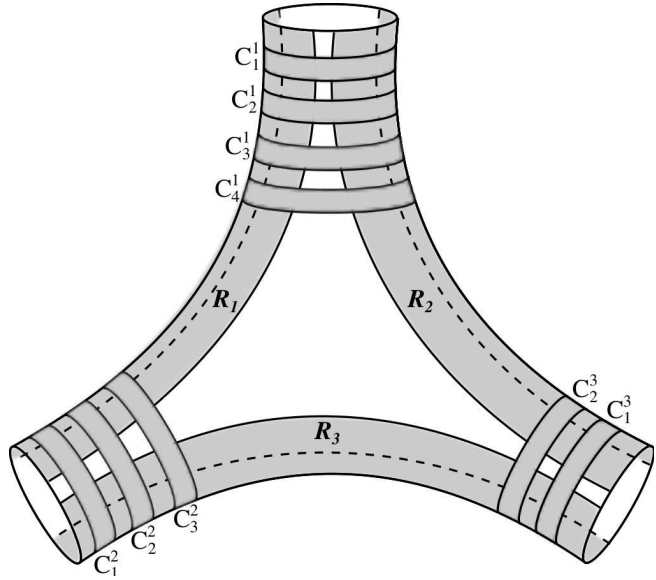}
 \caption[The curve $\delta$ will lie in the gray regions]{The curve $\delta$ will lie in the gray regions.}
 \label{fig:SideEdgeRegions}
\end{figure}

Given a cylinder $C_i^j$, we say its top is the boundary component closest to $\beta_j$ and its bottom is the other boundary component. If $b_{\sigma_j(i)} s_{\sigma_j(i)} b_{\sigma_l(k)}$ is a subword of $w$, we draw a line $l_i^j$ from the top of $C_i^j$ to the bottom of $C_k^l$. We draw $l_i^j$ inside the unique region $R_m, m \in \{1,2,3\}$ that connects the two cylinders. We require that all of the lines in the set $\{l_i^j \ | \ j = 1,2,3, i = 1, \dots, n_j\}$ be pairwise disjoint.

Let $p_i^j$ be the endpoint of $l_i^j$ on cylinder $C_i^j$ and let $q_k^l$ be the endpoint of $l_i^j$ on cylinder $C_k^l$. Since $\gamma$ is closed, each cylinder $C_i^j$ is now decorated with a point $p_i^j$ on its top boundary and a point $q_i^j$ on its bottom boundary. The boundary subword $b_{\sigma_j(i)}^j$ determines a twisting direction about $\beta_j$. So in each cylinder $C_{i}^j$, we draw a curve from $p_i^j$ to $q_i^j$ that twists in this direction for $|b_{\sigma_j(i)}|$ half-twists. Call this curve $\delta_i^j$. 

There is just one natural way to concatenate the twisting arcs $\delta_i^j$ with the lines $l_i^j$ to form a closed curve. Call this concatenation $\delta$. So,
\[
 \delta = \mathbf{O}_{i,j = 1}^n \ \delta_i^j \ \mathbf{O}_{i,j = 1}^n \ l_i^j 
\]
where we take the concatenations in the order that makes sense (Figure \ref{fig:CurveInRegions}). 
\begin{figure}[h!] \centering
 \includegraphics{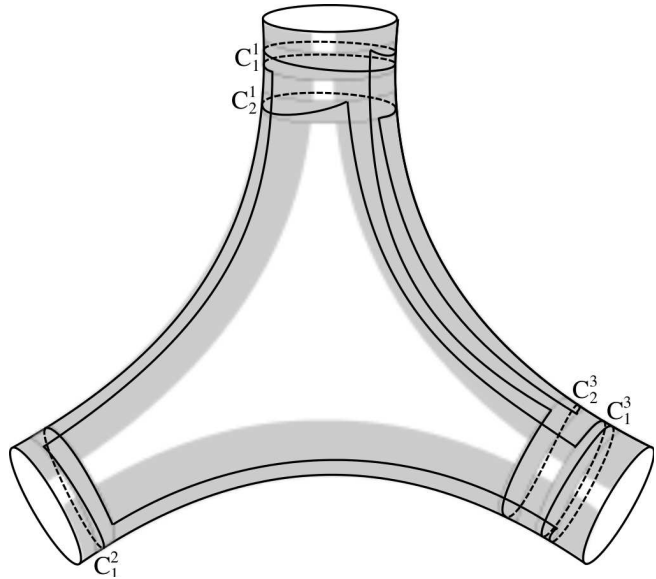}
 \caption[An example of a curve corresponding to a word $w$]{An example of a curve $\delta$ constructed from a word $w$.}
 \label{fig:CurveInRegions}
\end{figure}

Now we just need to count the self-intersections of $\delta$. Since $R_1,R_2$ and $R_3$ are pairwise disjoint, and since any two pairs of cylinders are also disjoint, the only intersections occur when a curve $\delta_i^j$ in a cylinder $C_i^j$ intersects a line $l_k^l$ in a region $R_k$. If $l_k^l$ passes through $C_i^j$, and if $\delta_i^j$ has $|b_{\sigma_j(i)}|$ half-twists, then 
\[
 |l_k^l \cap \delta_i^j| \leq \frac{|b_{\sigma_j(i)}|}{2} + 1
\]

By construction, the cylinder $C_i^j$ lies between cylinders $C_1^j, \dots, C_{i-1}^j$ and the rest of $\p$. So, the curve $\delta_i^j$ is intersected only by lines with endpoints on the cylinders $C_1^j, \dots, C_i^j$. In particular, both lines coming out of $C_1^j, \dots, C_{i-1}^j$ cross $\delta_i^j$. However, $\delta_i^j$ is only intersected by the line with endpoint on the top boundary component of $C_i^j$, not the line whose endpoint is on the bottom boundary. Thus, each cylinder $C_i^j$ contains at most $(2i-1)(\frac 12 |b_{\sigma_j(i)}| + 1)$ unique intersection points of $\delta \cap \delta$. Therefore,
\[
 |\delta \cap \delta| \leq \sum_{j =1,2,3} \sum_{i=1}^{n_j} i(|b_{\sigma_j(i)}| + 2)
\]
since $(2i-1)|(\frac 12 b_{\sigma_j(i)}| + 1) = (|b_{\sigma_j(i)}| + 2) (i -1)$ and $ i-1 \leq i$.

Since geodesics have the least number of self-intersections in their free homotopy class, this implies that
\[
 i(\gamma ,\gamma) \leq \sum_{j =1,2,3} \sum_{i=1}^{n_j} i(|b_{\sigma_j(i)}| + 2)
\]
Lastly, since $|b_i| \geq 2$ for each $i = 1, \dots, n$, we have that $|b_{\sigma_j(i)}| + 2 \leq 2|b_{\sigma_j(i)}|$. Therefore, we arrive at
\[
 i(\gamma ,\gamma) \leq \sum_{j =1,2,3} \sum_{i=1}^{n_j} 2i|b_{\sigma_j(i)}|
\]
where the right-hand side is exactly the self-intersection number for words defined above.
\end{proof}

\section[Constructing geodesics in $\G^c(L,K)$]{Constructing geodesics in $\G^c(L,K)$ to get a lower bound}
\label{sec:ConstructingGeodesics}

In this section, we prove the lower bound on $\#\G^c(L,K)$ for a pair of pants. Let us restate it here:
\begingroup

\def \thetheorem{\ref{thm:LowerBound}}
\begin{theorem} Let $\p$ be a hyperbolic pair of pants.  Let $l_{max}$ be the longest edge in the hexagon decomposition of $\p$. If $L \geq 8 l_{max}$ and $K \geq 12$, we have that
\[
 \# \G^c(L,K) \geq 2 + \frac{1}{2} \min\{  2^{\frac{1}{8l_{max}}L},2^{\sqrt{\frac{ K}{ 12}}}\}
\]
\end{theorem}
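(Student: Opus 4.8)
The plan is to exhibit an explicit family of words in $\W$, indexed by $N$ independent binary choices, and to show that for the appropriate $N$ every resulting geodesic lies in $\G^c(L,K)$ while distinct choices produce distinct geodesics; counting the family then yields the bound. First I would fix a periodic backbone: a cyclic sequence of boundary components alternating among the cuffs $\beta_1, \beta_2, \beta_3$ in a fixed balanced pattern with $N$ boundary subwords, so that consecutive subwords always lie on distinct components. By Lemma \ref{lem:WordDetermined}, once the sequence of boundary subwords is prescribed the connecting seam edges, and hence the whole word $w \in \W$, are determined, provided each transition is realizable by a seam; the fixed alternating pattern guarantees this. Into each of the $N$ slots I would encode a bit, either by toggling the length $|b_i|$ between two small values or by flipping the twisting direction of $b_i$ about its cuff, keeping all $|b_i|$ uniformly bounded by a small constant and keeping the counts $n_1, n_2, n_3$ of subwords per component balanced and fixed across the whole family.

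Next I would push the family through the two structural estimates. By Lemma \ref{lem:GeodesicAndWordLength}, $l(\gamma) \le l_{max}|w|$, and since each slot contributes a bounded number of edges we get $|w| \le 8N$, so $l(\gamma) \le 8 l_{max} N \le L$ as soon as $N \le L/(8 l_{max})$. By Lemma \ref{lem:IntersectionLowerBound} and Definition \ref{def:IntForWords}, $i(\gamma,\gamma) \le i(w,w) = 2\sum_{j}\sum_{i=1}^{n_j} i|b_{\sigma_j(i)}|$; because the $|b_i|$ are uniformly bounded and the $n_j$ are balanced (each about $N/3$), the quadratic weighting sums to at most $12N^2$, so that $i(\gamma,\gamma) \le K$ as soon as $N \le \sqrt{K/12}$. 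Taking $N = \lfloor \min\{L/(8 l_{max}), \sqrt{K/12}\}\rfloor$, which is at least $1$ by the hypotheses $L \ge 8 l_{max}$ and $K \ge 12$, places the whole family inside $\G^c(L,K)$.

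Finally I would count. Distinct bit-strings give distinct sequences of boundary subwords, hence distinct cyclic words by Lemma \ref{lem:WordDetermined}, hence distinct geodesics by Remark \ref{rem:InjectiveCorrespondence}. To prevent overcounting under cyclic rotation I would build a single distinguished marker slot into the backbone, so that each cyclic word has a well-defined starting point and the assignment from bit-strings to geodesics is injective, leaving only the orientation-reversal identification, which accounts for the factor $\tfrac12$. This produces at least $\tfrac12 \cdot 2^N$ distinct geodesics, and absorbing a couple of guaranteed short geodesics (for instance boundary-parallel ones, which lie in $\G^c(L,K)$ since $L \ge 8 l_{max}$) into the additive constant yields the stated $2 + \tfrac12\min\{2^{L/(8 l_{max})}, 2^{\sqrt{K/12}}\}$.

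I expect the main obstacle to be the self-intersection estimate: by Definition \ref{def:IntForWords} the word intersection number grows \emph{quadratically} in the number of subwords on a single cuff, which is exactly the source of the square root on $K$ in the theorem. The family must therefore be engineered to keep the per-cuff counts $n_j$ balanced and the subword lengths uniformly small while still supporting $N$ genuinely independent binary choices. Verifying simultaneously that each member is a legitimate non-backtracking closed element of $\W$ with $|b_i|\ge 2$, that the $n_j$ and $|b_i|$ are controlled uniformly over the entire family, and that the marker kills cyclic redundancy without eroding the exponential count, is where the real work lies.
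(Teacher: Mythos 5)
The decisive gap is in your counting step. You claim that distinct bit-strings give distinct cyclic words, "hence distinct geodesics by Remark \ref{rem:InjectiveCorrespondence}." That remark says the opposite implication: the particular projection map $\gamma \mapsto w(\gamma)$ from geodesics \emph{to} words is injective. It says nothing about the evaluation map $\W \rightarrow \G^c$, $w \mapsto \gamma(w)$, which is the map your construction uses — and that map is genuinely not injective: it is a many-to-one surjection. For instance, by Remark \ref{rem:Move2Dependence} the projection construction involves choices, and different choices produce \emph{different} words in $\W$ all representing the \emph{same} geodesic; more generally nothing in Definition \ref{defi:W} prevents two distinct words from being freely homotopic in $\p$. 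So your $2^N$ words could a priori collapse onto far fewer free homotopy classes, and the exponential lower bound does not follow. This is precisely the difficulty the paper isolates ("there is no easy way to describe the image of this section") and resolves with its key technical lemma: it restricts to \emph{alternating} words (no two consecutive boundary edges in the same hexagon) produced from cyclic paths in the graph $\Gamma_\E$, lifts to the universal cover, and uses the valence-3 tree dual to the hexagonal tiling to show that two such words with the same geodesic must coincide (Lemma \ref{lem:GraphConstrInjective}). Your proposal has no substitute for this argument, and it cannot be waved through as a citation.

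Everything else in your outline runs parallel to the paper's proof: the paper also encodes roughly one free choice per boundary subword (via edges of $\Gamma_\E$), also keeps $|b_i| \in \{2,3\}$ so that $|w(\tau)| \leq 4|\tau|$, also bounds $i(w,w) \leq 3|\tau|^2$ using Definition \ref{def:IntForWords} together with Lemmas \ref{lem:GeodesicAndWordLength} and \ref{lem:IntersectionLowerBound}, and also ends with a count of cyclic paths (done exactly via Burnside's lemma and the trace of the adjacency matrix, rather than your marker trick; the paper's factor $\tfrac 12$ comes from a parity adjustment, not from orientation reversal). Your thresholds $N \leq L/(8 l_{max})$ and $N \leq \sqrt{K/12}$ match the theorem's numerology. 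But be aware that repairing the injectivity gap is not a local fix: the injectivity proof constrains which words you may use (they must be alternating, which your "flip the twisting direction" bits may violate, and such flips can also break non-backtracking at the seams), so the set of admissible binary choices has to be engineered jointly with the injectivity argument — which is essentially what the graph $\Gamma_\E$ accomplishes in the paper.
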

\addtocounter{theorem}{-1}
\endgroup

In the version of Theorem \ref{thm:LowerBound} from the introduction, we define $l_{max}$ to be the longer of the length of the longest boundary component of $\p$ or distance between boundary components of $\p$. This allowed us to define the $l_{max}$ without referring to a hexagonal decomposition of $\p$. That definition of $l_{max}$ is at most twice the $l_{max}$ defined in this formulation. In fact, the length of each boundary edge is half the length of the boundary component on which it lies. Also, the length of each seam edge is exactly the distance between the boundary components it connects. So when we prove this version of Theorem \ref{thm:LowerBound}, we also prove the version stated in the introduction.

\subsection{Proof summary}
\label{sec:ProofSummary}
The proof of the theorem is organized as follows.
\begin{itemize}
 \item We have a canonical surjection $\W \rightarrow \G^c$. The map $\G^c \rightarrow \W$ described in Section \ref{sec:CombModel} is a section of this map. However, there is no easy way to describe the image of this section. So instead, we
%
%
%
%
 describe a large subset $\W_\Gamma \subset \W$ so that the map $\W_\Gamma \rightarrow \G^c$ is injective. This is done as follows:
 \begin{itemize}
  \item In Lemma \ref{lem:GraphWordConstruction}, we give a construction that turns closed paths in the graph $\Gamma_\E$, found in Figure \ref{fig:EdgeGraph}, into words that lie in $\W$. This means we get a map from closed paths $\tau$ in $\Gamma_\E$ to closed geodesics $\gamma(\tau) \in \G^c$:
 \[
  \tau \rightarrow w(\tau) \rightarrow \gamma(\tau)
 \] 
 \item In Lemma  \ref{lem:GraphConstrInjective}, we show that this map is one-to-one. So we get an injection from words in $\W$ that come from closed paths in $\Gamma_\E$ to closed geodesics in $\G^c$.
  \end{itemize}
    \end{itemize}
If $\W_\Gamma(L,K) \subset \W_\Gamma$ is the set of words that map to $\G^c(L,K)$, we get a lower bound on $\#\W_\Gamma(L,K)$ as follows:
 \begin{itemize}
 \item We get conditions on closed paths $\tau$ in $\Gamma_\E$ so that $\gamma(\tau) \in \G^c(L,K)$. In fact, we find a function $N(L,K)$ so that if $\tau$ is a closed path in $\Gamma_\E$ and $\gamma(\tau)$ is the corresponding closed geodesic then
 \[
  |\tau| \leq N(L,K) \implies \gamma(\tau) \in \G^c(L,K)
 \]
where $|\tau|$ is the path length of $\tau$ (Lemma \ref{lem:BoundOnPathLength}.)

To find $N(L,K)$, we use the map $\tau \mapsto w(\tau) \in \W$ as an intermediary. The construction of $w(\tau)$ directly implies that 
\[
|w(\tau)| \leq 4 |\tau| \mbox{ and } i(w,w) \leq  3 |\tau|^2
\]
We then apply Lemmas \ref{lem:GeodesicAndWordLength} and \ref{lem:IntersectionLowerBound} that say
\[
l(\gamma) \leq l_{max}|w(\gamma)| \mbox{ and } i(\gamma, \gamma) \leq i(w(\gamma), w(\gamma))
\]
to get the relationship between $|\tau|$ and the quantities $l(\gamma(\tau))$ and $i(\gamma(\tau), \gamma(\tau))$.
 
 
 \item We get an explicit formula for the number of closed paths in $\Gamma_\E$ of length $N$ (Lemma \ref{lem:NumberOfPaths}).
 
 \item Estimating the number of paths of length at most $N(L,K)$ gives a lower bound on $\#\G^c(L,K)$ for a pair of pants (Section \ref{sec:PfLowerBound}.)
\end{itemize}

\subsection{Building words that correspond to closed geodesics}

Consider the labeling of the (oriented) boundary edges in $\E$ given in Figure \ref{fig:PantsBoundaryEdges}. If $x_i$ is a labeled edge, then the same edge with the opposite orientation will be denoted $x_i^{-1}$.

\begin{figure}[h!] \centering
 \includegraphics{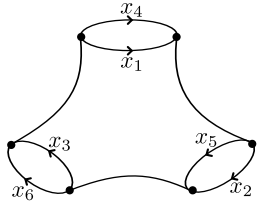}
 \caption{Labeled boundary edges}
 \label{fig:PantsBoundaryEdges}
\end{figure}

\begin{figure}[h!] \centering
 \includegraphics{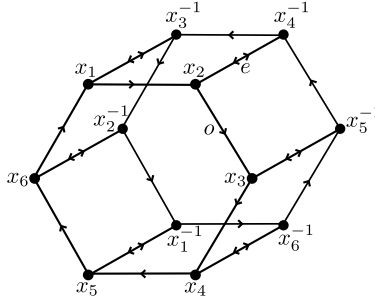}
 \caption[The graph $\Gamma_\E$]{The graph $\Gamma_\E$. All one-way edges are labeled $o$ and all two-way edges are labeled $e$.}
 \label{fig:EdgeGraph}
\end{figure}

\begin{defi}
 We say a path is cyclic if it is a cyclic word in its vertices. It is primitive if the word is primitive.
\end{defi}

\begin{lem}
\label{lem:GraphWordConstruction}
Any cylic path $\tau$ in the directed graph $\Gamma_\E$ in Figure \ref{fig:EdgeGraph} corresponds to a cyclic word $w(\tau) \in \W$ and an oriented closed geodesic $\gamma(\tau)$.
\end{lem}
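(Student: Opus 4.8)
The plan is to turn the construction $\tau \mapsto w(\tau)$ into an explicit recipe read directly off $\Gamma_\E$, and then to verify by hand that the resulting cyclic word meets each of the three requirements of Definition \ref{defi:W}. Once membership $w(\tau) \in \W$ is established, the geodesic $\gamma(\tau)$ costs no extra work: it is the image of $w(\tau)$ under the canonical map $\W \to \G^c$ recorded just after Definition \ref{defi:W}, which sends a word to the geodesic in the free homotopy class of the associated closed curve $p(w(\tau))$, with orientation inherited from the direction of $\tau$.

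First I would fix, using the labeling in Figure \ref{fig:PantsBoundaryEdges}, a dictionary sending each step of the cyclic path $\tau$ to a short block of letters in $\E$: a two-way edge (labeled $e$) records staying on one boundary component of $\p$ and contributes boundary edges there, while a one-way edge (labeled $o$) records crossing to a new boundary component and contributes the single seam edge joining them together with the adjacent boundary edges. Concatenating these blocks in the cyclic order of $\tau$ produces $w(\tau)$; since $\tau$ closes up, so does $w(\tau)$, and because each edge of $\Gamma_\E$ is by design an admissible adjacency of letters in $\E$, this already yields the first condition of Definition \ref{defi:W}, namely that the letters concatenate into a closed path $p$.

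Next I would check the remaining two conditions against Figure \ref{fig:EdgeGraph}. For the no-back-tracking condition I would verify that $\Gamma_\E$ has no pair of consecutive edges whose blocks place a letter of $\E$ immediately adjacent to its inverse; this is a finite check over the vertices and their out-edges. For the structural condition $w(\tau) = b_1 s_1 \cdots b_n s_n$ with $|s_i| = 1$ and $|b_i| \ge 2$, I would argue that each one-way edge contributes exactly one seam edge, so seam letters stay isolated, and that the out-edges available after a one-way edge force the path to accumulate at least two boundary edges before the next seam transition, giving $|b_i| \ge 2$.

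The main obstacle is the structural condition, and within it the bound $|b_i| \ge 2$: this is precisely the combinatorial constraint that the one-way/two-way edge pattern of $\Gamma_\E$ was designed to enforce, so the argument rests on reading the adjacency structure off Figure \ref{fig:EdgeGraph} and confirming that no vertex permits two seam transitions in immediate succession. The back-tracking check is a smaller instance of the same bookkeeping. With both verified, $w(\tau) \in \W$ is immediate, and $\gamma(\tau)$ is delivered by the canonical map.
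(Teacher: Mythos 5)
Your high-level skeleton (an explicit dictionary from $\tau$ to a word, verification of the three conditions of Definition \ref{defi:W}, then the canonical map $\W \to \G^c$ to produce $\gamma(\tau)$) matches the paper's proof, but your dictionary misreads what $\Gamma_\E$ encodes, and the verification you outline rests on properties the graph does not have. In the paper's construction the boundary subwords are attached to the \emph{vertices} of $\tau$, not to its edges: the vertex $v_i$ contributes a subword $b_i$ whose first letter is the label of $v_i$ and whose length is $3$ if the edge out of $v_i$ is labeled $o$, and $2$ if it is labeled $e$; and \emph{every} step of the path, whatever its label, contributes exactly one seam letter $s_i$, giving $w(\tau) = b_1 s_1 \cdots b_n s_n$ with $n = |\tau|$. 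This is forced by the geometry: the boundary components of $\p$ pair $x_j$ with $x_{j+3}$, while $o$-edges join $x_j$ to $x_{j+1}$ and $e$-edges join $x_j$ to $x_{j+2}^{-1}$, so \emph{all} edges of $\Gamma_\E$ join boundary edges lying on different boundary components, i.e.\ every step is a seam crossing; the label governs only how much twisting happens at the source vertex. In particular $|b_i| \ge 2$ is automatic from the construction, and the only thing requiring a check is that a seam letter $s_i$ exists making $b_i s_i b_{i+1}$ a non-backtracking path, which the paper verifies in four cases.

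Your dictionary instead treats $e$-edges as ``staying on one boundary component'' (boundary letters only, no seam) and $o$-edges as the seam crossings, and both halves fail concretely. First, $x_j$ and $x_{j+2}^{-1}$ lie on different boundary components of $\p$ and do not even share an endpoint, so under your reading an $e$-step would place two non-adjacent boundary letters next to each other; the letters then do not concatenate into a closed path, so your claim that the first condition of Definition \ref{defi:W} is automatic is false. Second, your key check for $|b_i| \ge 2$ --- that ``no vertex permits two seam transitions in immediate succession'' --- is false of $\Gamma_\E$ under your reading: every vertex $x_j$ has an incoming $o$-edge from $x_{j-1}$ and an outgoing $o$-edge to $x_{j+1}$, so there are paths (indeed full cycles, such as $x_1 x_2 x_3 x_4 x_5 x_6$) consisting entirely of consecutive $o$-edges. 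So the verification collapses exactly at the step you identify as the main obstacle. The repair is to adopt the vertex-based dictionary above, after which membership in $\W$ reduces to the finite seam-existence check, and $\gamma(\tau)$ is obtained from $w(\tau)$ exactly as you say.
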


\begin{example}
\label{ex:GammaPath}
 The cyclic path $x_1 x_2 x_4^{-1} x_3^{-1}$ (the top square) corresponds to the cyclic word 
 \[
 w =  (x_1 x_4^{-1} x_1) \cdot s_1 \cdot ( x_2 x_5^{-1}) \cdot s_2 \cdot (x_4^{-1} x_1 x_4^{-1}) \cdot s_3 \cdot (x_3^{-1} x_6) \cdot s_4
  \]
where $s_1, \dots, s_4$ are the unique side edges that make $w$ a word in $\W$.
\end{example}

\begin{proof}
We start with a description of $\Gamma_\E$. The vertices of $\Gamma_\E$ are the edges in $\E$ as labeled in Figure \ref{fig:PantsBoundaryEdges}. The vertices on the front hexagon are labeled by edges $x_1, \dots, x_6$ and the vertices on the back hexagon are labeled by edges $x_1^{-1}, \dots,x_6^{-1}$. For each $i$, a directed edge labeled $o$ goes from vertex $x_i$ to $x_{i+1}$ and from $x_i^{-1}$ to $x_{i-1}^{-1}$. A bidirectional edge labeled $e$ goes between vertices $x_i$ and $x_{i+2}^{-1}$. (All indices are taken modulo 6.)

Take a cyclic path $\tau$ in $\Gamma_\E$ with $\tau = v_1 \dots v_n$ (where $v_i$ is a vertex of $\Gamma_\E$ for each $i$.) We associate to each vertex $v_i$ a boundary subword $b_i$. The first letter of $b_i$ is the edge label of $v_i$. If $v_i$ is joined to $v_{i+1}$ by an edge labeled $o$, then $|b_i|=3$. Otherwise $|b_i| = 2$. Note that specifying the initial letter and length of the boundary subword uniquely determines $b_i$.

In Example \ref{ex:GammaPath}, $v_1$ has label $x_1$ and $x_1$ is joined to $x_2$ by an edge labeled $o$. So $b_1 = x_1 x_4^{-1} x_1$ has length 3 and starts with $x_1$.

We claim that there are seam edges $s_1, \dots, s_n$ so that $b_1s_1 \dots b_n s_n \in \W$. There are four cases to check: $v_i$ could have the form $x_j$ or $x_j^{-1}$ and the edge from $v_i$ to $v_{i+1}$ could be labeled either $e$ or $o$. Suppose $v_i$ is labeled $x_j$ and the edge from $v_i$ to $v_{i+1}$ is labeled e, so $|b_i| = 2$. Then given the edge labels in Figure \ref{fig:PantsBoundaryEdges}, $b_i = x_j x_{j+3}^{-1}$. 

The edge labeled $e$ joins vertex $x_j$ to vertex $x_{j+2}^{-1}$. Thus, $v_{i+1} =  x_{j+2}^{-1}$. So $b_{i+1}$ starts with $x_{j+2}^{-1}$. There is a seam edge between $x_{j+3}^{-1}$ and $x_{j+2}^{-1}$ for all $j = 1, \dots, 6$. So there is a seam edge $s_i$ so that $b_i s_i b_{i+1}$ forms a non-backtracking path. The other cases can be checked in the same way. 

Therefore, the cyclic path $\tau = v_1 \dots v_n$ corresponds to a cyclic word $w(\tau) = b_1s_1 \dots b_ns_n \in \W$. Since each word in $\W$ corresponds to a closed geodesic, $\tau$ corresponds to a closed geodesic $\gamma(\tau) \in \G^c$.

\end{proof}

\subsection{Map from paths in $\Gamma_\E$ to geodesics injective}

We now have maps
\[
 \tau \rightarrow w(\tau) \rightarrow \gamma(\tau)
\]
We show that $w(\tau)$ actually lies in the following special class of words.

\begin{defi}
 A cyclic word $b_1s_1 \dots b_ns_n \in \W$ is \textbf{alternating} if no two consecutive boundary edges lie on the same hexagon. In particular, the last edge of $b_i$ does not lie in the same hexagon as the first edge of $b_{i+1}$ for each $i$.
\end{defi}

\begin{claim}
 For each cyclic path $\tau$ in $\Gamma_\E$, the word $w(\tau)$ constructed in the proof of Lemma \ref{lem:GraphWordConstruction} is a cyclic alternating word.
\end{claim}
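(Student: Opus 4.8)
The plan is to reduce the claim to a statement about how the \emph{index} of a boundary edge changes as one reads across $w(\tau)$, exploiting the fact that membership in the front or back hexagon of the decomposition is governed by the parity of this index. Reading off Figure \ref{fig:PantsBoundaryEdges}, the two boundary edges lying on a single component of $\partial \p$ are $x_j$ and $x_{j+3}^{-1}$ (indices mod $6$), and these necessarily lie on opposite hexagons, since each boundary component meets each hexagon in exactly one boundary edge. As $j$ and $j+3$ have opposite parity, I would first record the key dictionary: a boundary edge $x_m^{\pm 1}$ lies on the front hexagon if and only if $m$ has a fixed parity (say, odd), and on the back hexagon otherwise.

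With this dictionary there are exactly two conditions to verify for $w(\tau) = b_1 s_1 \dots b_n s_n$ to be alternating: (i) within each boundary subword $b_i$, consecutive boundary edges lie on opposite hexagons, and (ii) the last edge of $b_i$ and the first edge of $b_{i+1}$ lie on opposite hexagons. For (i) I would invoke the explicit form of $b_i$ from the proof of Lemma \ref{lem:GraphWordConstruction}: a boundary subword is $x_j x_{j+3}^{-1}$ or $x_j x_{j+3}^{-1} x_j$ (and the analogous forms beginning with $x_j^{-1}$), so its letters have indices alternating between $j$ and $j+3$, which are of opposite parity and hence on opposite hexagons. This is precisely the geometric content that $b_i$ winds about a single boundary component, traversing its two boundary edges in turn.

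For (ii) I would split on the type of the outgoing edge at the vertex $v_i$ of $\Gamma_\E$. If the edge $v_i \to v_{i+1}$ is labeled $o$, then $|b_i| = 3$, so the last letter of $b_i$ repeats $v_i$ and shares its index parity; on the other hand an $o$-edge shifts the index by $\pm 1$, so $v_{i+1}$, the first letter of $b_{i+1}$, has the opposite parity, placing it on the opposite hexagon. If instead the edge is labeled $e$, then $|b_i| = 2$, so the last letter of $b_i$ has index differing from that of $v_i$ by $3$ (opposite parity), while an $e$-edge shifts the index by $\pm 2$ (same parity as $v_i$); comparing, the last letter of $b_i$ and the first letter of $b_{i+1}$ again have opposite parity. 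In both cases condition (ii) holds. I would sanity-check the bookkeeping against Example \ref{ex:GammaPath}, where the boundary edges of $w$ read $x_1, x_4^{-1}, x_1, x_2, x_5^{-1}, x_4^{-1}, x_1, x_4^{-1}, x_3^{-1}, x_6$, with indices $1,4,1,2,5,4,1,4,3,6$ whose parities do alternate cyclically.

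The step I expect to be the main obstacle is establishing the parity dictionary of the first paragraph rigorously, that is, confirming from Figures \ref{fig:PantsBoundaryEdges} and \ref{fig:EdgeGraph} that the hexagon containing $x_m^{\pm 1}$ is determined by the parity of $m$, and in particular that the index shifts built into the definitions of the $o$- and $e$-edges interact with this parity exactly as claimed. Once the parity dictionary is pinned down, the remaining case analysis is purely mechanical.
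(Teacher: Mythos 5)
Your proposal is correct and takes essentially the same approach as the paper: both reduce the claim to the within-subword and between-subword adjacency checks, and settle the latter by a case analysis on the $o$/$e$ edge labels, reading hexagon membership off Figure \ref{fig:PantsBoundaryEdges}. The parity dictionary you flag as the main obstacle does hold for the paper's labeling --- the paper's own computation (that $x_{j+3}^{-1}$ and $x_{j+2}^{-1}$ lie on different hexagons for every $j$) is precisely the statement that consecutive indices alternate hexagons --- so your reorganization simply collapses the paper's four cases into two uniform parity computations.
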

\begin{proof}
 Let $\tau = v_1 v_2 \dots v_n$ be a cyclic path in $\Gamma_\E$. Then $\tau$ corresponds to a word $b_1 s_1 \dots b_n s_n \in \W$. Inside each boundary subword, adjacent boundary edges lie on different hexagons. So we just need to check that the last boundary edge of $b_i$ lies on a different hexagon than the first boundary edge in $b_{i+1}$. Once again, this can be done by considering four cases. We have the cases where $v_i$ is of the form $x_j$ or the form $x_j^{-1}$ and the cases where $v_i$ is joined to $v_{i+1}$ by an edge labeled $e$ or an edge labeled $o$.
 
 We will do the case where $v_i$ is labeled $x_j$ and $v_i$ is joined to $v_{i+1}$ by an edge labeled $e$. If $v_i$ is labeled $x_j$ and the edge is labeled $e$, then $b_i = x_j x_{j+3}^{-1}$. Since $v_i$ is joined to $v_{i+1}$ by an edge labeled $e$, we have that $b_{i+1}$ starts with the letter $x_{j+2}^{-1}$. We see from Figure \ref{fig:PantsBoundaryEdges} that $x_{j+3}^{-1}$ and $x_{j+2}^{-1}$ lie on different hexagons. The other 3 cases are shown in the same way. 
\end{proof}


\begin{lem}
\label{lem:GraphConstrInjective}
 Suppose $\tau \neq \tau'$ are two primitive cyclic paths in $\Gamma_\E$. Then $\gamma(\tau) \neq \gamma(\tau')$.
\end{lem}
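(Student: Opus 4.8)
The plan is to factor the map $\tau \mapsto \gamma(\tau)$ as $\tau \mapsto w(\tau) \mapsto \gamma(\tau)$ and establish injectivity of each stage separately, exploiting the fact (proved in the Claim just above) that every $w(\tau)$ is an alternating word.

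First I would show that $\tau \mapsto w(\tau)$ is injective on primitive cyclic paths. In the construction of Lemma \ref{lem:GraphWordConstruction}, each boundary subword $b_i$ of $w(\tau) = b_1 s_1 \cdots b_n s_n$ begins with the vertex label $v_i$, and $|b_i| = 3$ or $2$ according to whether the edge of $\Gamma_\E$ leaving $v_i$ carries the label $o$ or $e$. Hence from the cyclic sequence of initial letters and lengths of the $b_i$ one reads off both the cyclic vertex sequence $v_1, \ldots, v_n$ and the type of each traversed edge, and so recovers $\tau$ as a cyclic path (primitivity guarantees there is no ambiguity in reading $\tau$ off as a genuinely $n$-periodic cyclic object). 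It therefore suffices to prove the second stage: distinct primitive alternating cyclic words give distinct oriented geodesics.

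For the second stage I would pass to free homotopy classes. Two closed edge-loops are freely homotopic iff they are conjugate in $\pi_1(\p)$, and each nontrivial conjugacy class contains exactly one oriented geodesic, so it is enough to show that distinct primitive alternating cyclic words $w \neq w'$ represent non-conjugate elements. The key structural point is that $\p$ is built from the $1$-skeleton $G$ of its hexagon decomposition by attaching the two hexagon $2$-cells, so $\pi_1(\p)$ is $\pi_1(G)$ modulo the two relations "each hexagon boundary loop is trivial". A word in $\W$ yields a non-backtracking edge-loop in $G$, already cyclically reduced in the \emph{free} group $\pi_1(G)$; the only way two such loops can become conjugate in the quotient is through the hexagon relations, i.e.\ by replacing a subpath running along part of a hexagon boundary by its complementary subpath --- precisely Move 2 and its longer analogues. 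I would then show that the alternating hypothesis forbids exactly these simplifications: within each boundary subword consecutive boundary edges already lie on opposite hexagons, and the alternating condition extends this across the seam edges, so no subpath of $p(w)$ ever traverses more than half of a single hexagon boundary. Thus each alternating word is a Dehn-reduced (geodesic) representative of its conjugacy class, and distinct primitive alternating cyclic words, admitting no simplifying move between them, represent distinct conjugacy classes. Combined with the first stage this gives $\gamma(\tau) \neq \gamma(\tau')$.

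The main obstacle is the normal-form claim in the second stage: one must rule out that the hexagon relations identify two different alternating words, i.e.\ verify that no sequence of homotopies across the two hexagons can carry $p(w)$ to $p(w')$ without at some point producing a subpath covering four or more consecutive sides of a hexagon, which the alternating condition prohibits. An alternative and possibly cleaner route is to prove that the canonical projection of Lemma \ref{lem:ProjectionConstruction} is idempotent on alternating words, i.e.\ $w(\gamma(\tau)) = w(\tau)$; injectivity would then follow at once from Remark \ref{rem:InjectiveCorrespondence} together with the first stage, and the entire difficulty would be concentrated in showing that no instance of Move 1 or Move 2 applies to $p(w)$ and that the segments of $\gamma(\tau)$ project back onto exactly the edges of $w(\tau)$.
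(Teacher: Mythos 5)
Your first stage (recovering $\tau$ from $w(\tau)$ via the initial letters and lengths of the boundary subwords) is fine, and it is essentially the closing step of the paper's own proof. The genuine gap is in your second stage, and it sits exactly where you yourself locate "the main obstacle": the claim that two cyclically reduced edge-loops in the one-skeleton $G$ can only become conjugate in $\pi_1(\p) = \pi_1(G)/\langle\langle h_1, h_2\rangle\rangle$ through successive replacements of a subpath of a hexagon boundary by its complementary subpath, so that "Dehn-reduced" alternating words represent distinct conjugacy classes. This is not a formal consequence of having a presentation; it is a small-cancellation-type normal form theorem that must be proved, typically by an annular (conjugacy) diagram argument. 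Worse, the standard machinery you are implicitly invoking does not apply off the shelf here: the pieces of the hexagon presentation are the seam edges, of length exactly $1 = \frac{1}{6}\cdot 6$, so the presentation fails the metric condition $C'(1/6)$ under which Dehn's algorithm and its conjugacy analogue are usually quoted; one is left in the borderline non-metric $C(6)$ setting, where the word and conjugacy statements are genuinely more delicate. Since this unproved claim is the entire content of the lemma (everything else in your outline is bookkeeping), the proposal as written does not constitute a proof.

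The missing idea, which is how the paper closes this gap, is to argue geometrically in the universal cover rather than combinatorially in the presentation: the hexagon decomposition lifts to a tiling of $\tilde\p$ whose dual graph $\Gamma$ is a trivalent tree, the alternating condition forces the hexagons visited by a lift $\tilde p(w)$ to trace out an embedded line $\Gamma(w)$ in that tree, and complete geodesics in $\tilde\p$ correspond bijectively to embedded lines in $\Gamma$. If $\gamma(w)=\gamma(w')$ one may choose lifts with $\Gamma(w)=\Gamma(w')$, and then a local argument (through any three consecutive hexagons the two paths must use the same boundary edge, and a pair of boundary edges determines the seam edge between them) forces $p(w)=p(w')$, hence $w=w'$ by primitivity. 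This replaces the normal-form theorem you would need by elementary facts about trees. Your proposed alternative route, proving $w(\gamma(\tau))=w(\tau)$ and citing Remark \ref{rem:InjectiveCorrespondence}, has the same status: it is plausible but unproven, and it carries the extra burden that $p(\gamma)$ is only well defined after the arbitrary choices of Remark \ref{rem:Move2Dependence}, plus genuine hyperbolic-geometry control over where the segments of $\gamma(\tau)$ lie, which the paper's tree argument entirely avoids.
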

\begin{proof}
 Let $w(\tau)$ and $w(\tau')$ be the words constructed from $\tau$ and $\tau'$, respectively. We know that $w(\tau)$ and $w(\tau')$ are cyclic alternating words. Because $\tau$ and $\tau'$ are primitive, $w(\tau)$ and $w(\tau')$ are primitive as well. We will show that if $w$ and $w'$ are primitive, cyclic alternating words, then their geodesics $\gamma(w)$ and $\gamma(w')$ are distinct.
 
 Let $w$ be a primitive, cyclic alternating word and let $\gamma(w)$ be the corresponding geodesic. Let $p(w)$ be the closed curve in $\p$ formed by concatenating the edges in $w$. 
 Lift $p(w)$ to a complete curve $\tilde p(w)$ in the universal cover, $\tilde \p$.
 
 The hexagon decomposition of $\p$ lifts to a hexagonal tiling of $\tilde \p$. We get a graph $\Gamma$ dual to this hexagonal tiling: Put a vertex in the middle of each hexagon, and join two vertices if their hexagons share a side edge. This graph is a valence 3 tree. (See Figure \ref{fig:HexTilingDualGraph}).
 \begin{figure}[h!]
  \includegraphics{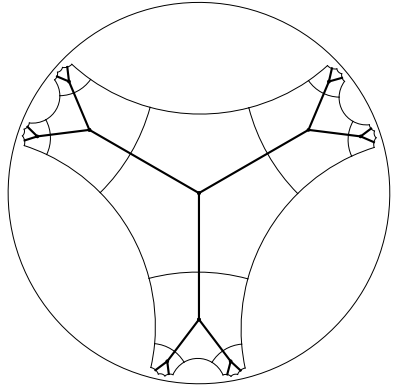}
  \caption{A piece of the graph $\Gamma$ dual to the hexagonal tiling of $\tilde \p$}
  \label{fig:HexTilingDualGraph}
 \end{figure}

 
Let $\Gamma(w)$ be the subgraph of $\Gamma$ that has a vertex for every hexagon that contains a boundary edge of $\tilde p(w)$ (see Figure \ref{fig:ThreeWs}). We want to show that $\Gamma(w)$ is an embedded line. If not, then $\Gamma(w)$ would have a valence 1 vertex. This would correspond to $\tilde p(w)$ entering a hexagon $h$, traversing some of its boundary edges, and then leaving $h$ through the same seam edge through which it entered. But this cannot be achieved if $\tilde p(w)$ never has more than one consecutive boundary edge in the same hexagon. Thus $\Gamma(w)$ is an embedded line.
 
 \begin{figure}[h!]
  \includegraphics{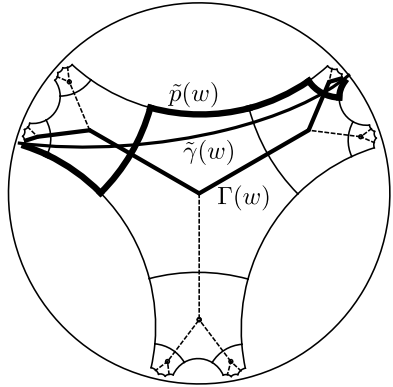}
  \caption{The subgraph $\Gamma(w)$ goes through the same hexagons as $\tilde \gamma(w)$.}
  \label{fig:ThreeWs}
 \end{figure}
 

 

We can do the same construction for any other primitive, cyclic alternating word $w'$. Let $\gamma(w')$ be the geodesic corresponding to $w'$ and let $p(w')$ be the concatenation of edges in $w'$. Lift $p(w')$ to a curve $\tilde p(w')$ and construct the subgraph $\Gamma(w')$. This subgraph is again a line embedded in $\Gamma$. 

Note that complete geodesics in $\tilde \p$ are in one-to-one correspondence with embedded lines in $\Gamma$. Therefore, $\Gamma(w)$ corresponds to a unique complete geodesic $\tilde \gamma$ that must be a lift of $\gamma(w)$, and likewise, $\Gamma(w')$ corresponds to a unique complete geodesic $\tilde \gamma'$ that must be a lift of $\gamma(w')$ (see Figure \ref{fig:ThreeWs}).

Suppose for contradiction that $\gamma(w) = \gamma(w')$ as oriented geodesics. So we could have chosen a lift $\tilde p(w')$ so that $\Gamma(w) = \Gamma(w')$ as oriented paths in $\Gamma$. (Note that the orientations on $\Gamma(w)$ and $\Gamma(w')$ come from the orientations of $p(w)$ and $p(w')$, respectively.) The vertices of $\Gamma(w)$ are in one-to-one correspondence with the hexagons traversed by $\tilde p(w)$, and the analogous statement is true for $\tilde p(w')$. So $\tilde p(w)$ and $\tilde p(w')$ pass through the exact same hexagons in $\tilde \p$.

Suppose $\Gamma(w)$ and $\Gamma(w')$ pass through consecutive hexagons $h_1, h_2$ and $h_3$. Then $\tilde p(w)$ and $\tilde p(w')$ both travel from $h_1$ to $h_3$ through the boundaries of these hexagons. Furthermore, they each pass through just one boundary edge in $h_2$. Therefore, they both pass through the same boundary edge of $h_2$ (see Figure \ref{fig:ThreeHex}.) Thus, $\tilde p(w)$ and $\tilde p(w')$ pass through all the same boundary edges. But there is just one side edge that can lie between a pair of boundary edges. So $\tilde p(w)$ and $\tilde p(w')$ must have the same image in $\p$. Since $w$ and $w'$ are primitive, this implies $p(w) = p(w')$ as cyclic paths. So $w = w'$ as cyclic words.

\begin{figure}[h!]
 \includegraphics{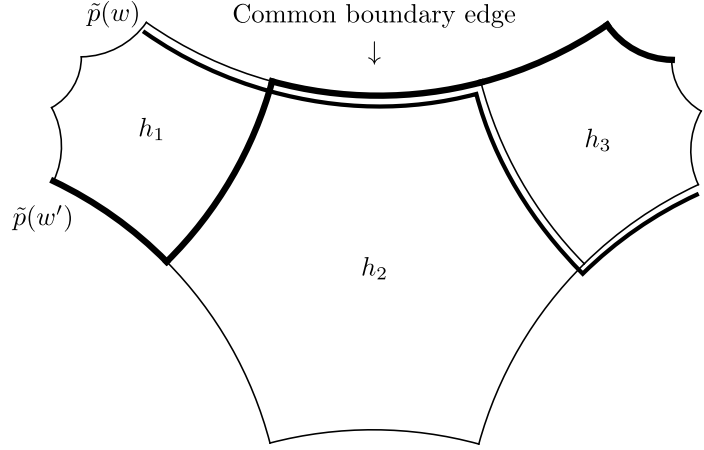}
 \caption{If $\tilde p(w)$ and $\tilde p(w')$ pass through $h_1, h_2$ and $h_3$, and $w$ and $w'$ are cyclic alternating words, then $\tilde p(w)$ and $\tilde p(w')$ must pass  through the same boundary edge of $h_2$.}
 \label{fig:ThreeHex}
\end{figure}

We showed that if $\gamma(\tau) = \gamma(\tau')$ as oriented geodesics then $w(\tau) = w(\tau')$ as cyclic words. Since $w(\tau)$ always has more than one boundary subword, we can recover $\tau$ from $w(\tau)$. In other words, the map $\tau \rightarrow w(\tau)$ is injective. So $\tau = \tau'$ as cyclic paths. Therefore two primitive, cyclic paths $\tau$ and $\tau'$ in $\Gamma_\E$ are equal if and only if $\gamma(\tau) = \gamma(\tau')$ as oriented geodesics.

\end{proof}

\subsection{Path length and geodesic length and intersection number}

Fix $L$ and $K$. We want to count the number of cyclic paths $\tau$ in $\Gamma_\E$ so that the corresponding closed geodesic satisfies $\gamma(\tau) \in \G^c(L,K)$. First, we show that we can guarantee $\gamma(\tau) \in \G^c(L,K)$ with just an upper bound on path length $|\tau|$.

\begin{lem}
\label{lem:BoundOnPathLength}
 Fix $L$ and $K$. Suppose we have a cyclic path $\tau$ in $\Gamma_\E$ of length at most $N(L,K)$, for
 \[
N(L,K) = \min\{\frac{1}{4 l_{max}} L, \sqrt{\frac{1}{3} K} \}
 \]
Then $\gamma(\tau) \in \G^c(L,K)$, where $\gamma(\tau)$ is the closed geodesic corresponding to $\tau$.
\end{lem}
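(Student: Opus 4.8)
The plan is to bound both geometric quantities $l(\gamma(\tau))$ and $i(\gamma(\tau),\gamma(\tau))$ in terms of the path length $|\tau|$, and then substitute the hypothesis $|\tau| \le N(L,K)$. The argument is a two-stage translation: first from the combinatorics of the path $\tau$ to the combinatorics of the word $w(\tau)\in\W$ (controlling $|w(\tau)|$ and $i(w(\tau),w(\tau))$), and then from the word to the geodesic via Lemmas \ref{lem:GeodesicAndWordLength} and \ref{lem:IntersectionLowerBound}. By Lemma \ref{lem:GraphWordConstruction} the word $w(\tau)$ lies in $\W$ and represents $\gamma(\tau)$, so both of those lemmas apply.

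For the length bound, I would first read off from the construction in Lemma \ref{lem:GraphWordConstruction} that each of the $|\tau|$ vertices of $\tau$ contributes exactly one boundary subword $b_i$ with $|b_i|\le 3$ together with a single seam edge $s_i$. Hence $|w(\tau)| = \sum_i(|b_i|+1) \le 4|\tau|$. The upper-bound half of Lemma \ref{lem:GeodesicAndWordLength}, namely $l(\gamma)\le l_{max}|w|$, uses only that the concatenation of $w$ is freely homotopic to $\gamma$ and that each edge has length at most $l_{max}$, so it applies to the representing word $w(\tau)$ and gives $l(\gamma(\tau)) \le l_{max}|w(\tau)| \le 4 l_{max}|\tau|$.

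For the self-intersection bound, I would work through Definition \ref{def:IntForWords}. Writing $w(\tau)=b_1 s_1\cdots b_n s_n$ with $n=|\tau|$ and letting $n_1,n_2,n_3$ count the boundary subwords lying on each of the three boundary components (so $n_1+n_2+n_3=|\tau|$), I replace every factor $|b_{\sigma_j(i)}|$ by its upper bound $3$ to obtain $i(w(\tau),w(\tau)) \le 6\sum_{j}\sum_{i=1}^{n_j} i = 3\sum_j n_j(n_j+1)$, and then use $\sum_j n_j^2 \le \big(\sum_j n_j\big)^2 = |\tau|^2$ to conclude $i(w(\tau),w(\tau)) \le 3|\tau|^2$. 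Lemma \ref{lem:IntersectionLowerBound} then yields $i(\gamma(\tau),\gamma(\tau)) \le i(w(\tau),w(\tau)) \le 3|\tau|^2$. This weighted-sum estimate is the step I expect to demand the most care: the coefficients $i$ in Definition \ref{def:IntForWords} make the bound genuinely quadratic, and one must check both that passing to the uniform bound $|b_i|\le 3$ is legitimate and that splitting the sum across the three boundary components (via $\sum_j n_j^2 \le |\tau|^2$) does not cost the constant $3$.

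Finally I would substitute $|\tau| \le N(L,K) = \min\{\tfrac{1}{4l_{max}}L,\ \sqrt{\tfrac{1}{3}K}\}$. From $|\tau|\le \tfrac{1}{4l_{max}}L$ the length bound gives $l(\gamma(\tau)) \le 4l_{max}\cdot\tfrac{1}{4l_{max}}L = L$, and from $|\tau|\le \sqrt{\tfrac{1}{3}K}$ the intersection bound gives $i(\gamma(\tau),\gamma(\tau)) \le 3|\tau|^2 \le 3\cdot\tfrac{1}{3}K = K$. Both conditions defining $\G^c(L,K)$ are satisfied, so $\gamma(\tau)\in\G^c(L,K)$, which is exactly the claim.
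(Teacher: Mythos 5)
Your route is the same as the paper's: pass from $\tau$ to the word $w(\tau)=b_1s_1\cdots b_ns_n$ with $n=|\tau|$ and $|b_i|\le 3$, deduce $|w(\tau)|\le 4|\tau|$ and $i(w(\tau),w(\tau))\le 3|\tau|^2$, transfer both bounds to $\gamma(\tau)$ via Lemmas \ref{lem:GeodesicAndWordLength} and \ref{lem:IntersectionLowerBound}, and substitute $|\tau|\le N(L,K)$. The length half and the final substitution are correct, and you are right (indeed more explicit than the paper) that the upper bound of Lemma \ref{lem:GeodesicAndWordLength} and Lemma \ref{lem:IntersectionLowerBound} apply to any word in $\W$ representing $\gamma(\tau)$, not only to the canonical projection word $w(\gamma(\tau))$.

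However, the intersection estimate has a genuine gap at exactly the step you flagged. From $|b_{\sigma_j(i)}|\le 3$ you correctly get
\[
 i(w(\tau),w(\tau)) \;\le\; 6\sum_{j=1,2,3}\sum_{i=1}^{n_j} i \;=\; 3\sum_{j}\bigl(n_j^2+n_j\bigr) \;=\; 3\sum_j n_j^2 \,+\, 3|\tau|,
\]
but the inequality you invoke, $\sum_j n_j^2\le\bigl(\sum_j n_j\bigr)^2$, only yields $i(w(\tau),w(\tau))\le 3|\tau|^2+3|\tau|$: the linear term survives, and with $|\tau|\le\sqrt{K/3}$ this gives $i(\gamma(\tau),\gamma(\tau))\le K+\sqrt{3K}$, not $\le K$, so the lemma does not follow. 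What is actually needed is $\sum_j(n_j^2+n_j)\le\bigl(\sum_j n_j\bigr)^2$, equivalently $n_1+n_2+n_3\le 2(n_1n_2+n_1n_3+n_2n_3)$, and this is \emph{false} for arbitrary non-negative integers (take $n_1>0$, $n_2=n_3=0$). It does hold for the words at hand, because at least two of the $n_j$ are positive: every seam edge of $\p$ joins two distinct boundary components, so consecutive boundary subwords $b_i,b_{i+1}$ lie on different components, and $n=|\tau|\ge 2$ since $\Gamma_\E$ has no edge from a vertex to itself. With, say, $n_1,n_2\ge 1$ one has $n_1\le n_1n_2$, $n_2\le n_1n_2$, and $n_3\le (n_1+n_2)n_3$, which gives the required inequality. (The paper's own proof makes the same estimate and justifies it only by saying the $n_j$ are non-negative integers, which is also too weak as stated; but it at least asserts the correct intermediate inequality, whereas the inequality you cite cannot close the argument.) Adding the observation that at least two boundary components are visited completes your proof.
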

\begin{proof}
 Take a cyclic path $\tau$ in $\Gamma_\E$ of length at most $N(L,K)$. Then $\tau$ corresponds to a word $w(\tau) = b_1s_1 \dots b_ns_n$. By construction,
 \[
  n = |\tau|
 \]
where $|\tau|$ denotes the path length of $\tau$.

We constructed $w(\tau)$ so that $|b_i| \leq 3$ for each $i$. Thus, $|b_i s_i| \leq 4, \forall i$. Therefore, 
\[
 |w(\tau)| \leq 4 |\tau|
\]

Furthermore, we can get a bound on $i(w(\tau),w(\tau))$ as follows. We have that
\[
i(w(\tau),w(\tau)) = 2 \sum_{j = 1,2,3} \sum_{i = 1}^{n_j} i|b_{\sigma_j(i)}|
\]
for appropriate indices $\sigma_1(i), \sigma_2(i)$ and $\sigma_3(i)$ and where $n_j$ is the number of $b_i$ that lie on boundary component $\beta_j$ of $\p$. Thus,
\begin{align*}
 i(w(\tau),w(\tau)) & \leq 2 \sum_{j = 1,2,3} \sum_{i = 1}^{n_j} 3 i \\
  & = 3[(n_1^2 + n_1) + (n_2^2 + n_2)+(n_3^2 + n_3)]\\
  & \leq 3 (n_1 + n_2 + n_3)^2\\
  & = 3n^2 \\
  & = 3 | \tau|^2
\end{align*}
Note that we get the second inequality because $n_1, n_2$ and $n_3$ are non-negative integers. Therefore,
\[
 i(w(\tau), w(\tau)) \leq 3 |\tau|^2
\]

 Let $\gamma(\tau)$ be the geodesic corresponding to $\tau$ (and $w(\tau)$). By Lemmas \ref{lem:GeodesicAndWordLength} and \ref{lem:IntersectionLowerBound},
 \[
  l(\gamma(\tau)) \leq l_{max} |w(\tau)| \mbox{ and } i(\gamma(\tau), \gamma(\tau)) \leq i(w(\tau), w(\tau))
 \]
 where $l_{max}$ is the length of the longest boundary or seam edge in $\E$. Therefore,
 \[
   l(\gamma(\tau)) \leq 4 l_{max} |\tau|
 \]
and 
\[
 i(\gamma(\tau), \gamma(\tau)) \leq 3 |\tau|^2
\]

In particular, if
\[
 |\tau| \leq \frac{1}{4 l_{max}} L \mbox{ and } |\tau| \leq \sqrt{\frac{K}{3}}
\]
 then $\gamma(\tau) \in \G^c(L,K)$.
 
\end{proof}

\subsection{Counting paths}

By Lemma \ref{lem:BoundOnPathLength}, we can get a lower bound on $\#\G^c(L,K)$ via a lower bound on the number of paths of length $N(L,K)$. In the following lemma, we count the number of cyclic paths of length exactly $n$.

\begin{lem}
\label{lem:NumberOfPaths}
 Let $H_n$ be the number of cyclic paths in $\Gamma_\E$ of length exactly $n$. If $n$ is odd, then $H_n = 0$. Otherwise,
 \[
  H_{2n} = 2 + \frac 1n \sum_{d | n} \phi(d) 4^{n/d}
 \]
where $\phi(d)$ is the Euler totient function that counts the number of $k \leq d$ relatively prime to $d$.
\end{lem}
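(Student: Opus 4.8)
The plan is to identify $H_n$ with the number of closed walks of length $n$ in $\Gamma_\E$ taken up to cyclic rotation, and to compute this via the adjacency matrix together with Burnside's lemma. I would label the twelve vertices by $\Z/6 \times \{+,-\}$, writing $(i,+)$ for $x_i$ and $(i,-)$ for $x_i^{-1}$. Reading the edges off Figure \ref{fig:EdgeGraph}, the out-neighbors are $(i,+)\to(i+1,+)$, $(i,+)\to(i+2,-)$, $(i,-)\to(i-1,-)$ and $(i,-)\to(i-2,+)$, where the $e$-edges $(i,+)\leftrightarrow(i+2,-)$ are the bidirectional ones. Ordering the basis as the $+$ block followed by the $-$ block, and letting $S$ be the cyclic shift on $\mathbb{C}^6$, the adjacency matrix is the block matrix
\[
A = \begin{pmatrix} S & S^2 \\ S^{-2} & S^{-1}\end{pmatrix}.
\]
Since the number of based closed walks of length $m$ is $\operatorname{tr}(A^m)$, and a walk invariant under the rotation subgroup of order $n/e$ is precisely a length-$e$ closed walk repeated (of which there are $\operatorname{tr}(A^e)$), Burnside's lemma for the $\Z/n$-rotation action gives
\[
H_n = \frac1n \sum_{e \mid n}\phi(n/e)\operatorname{tr}(A^e).
\]

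The key step is the spectral computation of $A$, for which I would exploit the $\Z/6$-symmetry. All four blocks are polynomials in $S$, so they are simultaneously diagonalized by the Fourier basis of $S$; on the mode where $S$ acts by a sixth root of unity $\lambda$, the matrix $A$ reduces to the $2\times 2$ block
\[
A_\lambda = \begin{pmatrix} \lambda & \lambda^2 \\ \lambda^{-2} & \lambda^{-1}\end{pmatrix}.
\]
The crucial observation — the real content of the argument — is that $\det A_\lambda = \lambda\lambda^{-1} - \lambda^2\lambda^{-2} = 0$, so each block is rank one with eigenvalues $0$ and $\operatorname{tr} A_\lambda = \lambda+\lambda^{-1}$. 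Running $\lambda$ over the sixth roots of unity gives $\lambda+\lambda^{-1} = 2\cos(\pi a/3)$ for $a=0,\dots,5$, so the spectrum of $A$ is $0$ with multiplicity six together with $2$ and $-2$ each once and $1$ and $-1$ each twice. Hence
\[
\operatorname{tr}(A^m) = 2^m + (-2)^m + 2\cdot 1^m + 2\cdot(-1)^m,
\]
which vanishes for odd $m$ (this is exactly the statement $H_n=0$ for odd $n$) and equals $2\cdot 4^{m/2}+4$ for even $m$.

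To assemble the formula I would substitute this into the Burnside sum with $n$ replaced by $2n$. Only the even divisors $e=2f$ survive, and these correspond bijectively to $f\mid n$ via $2n/e = n/f$, so with $\operatorname{tr}(A^{2f}) = 2\cdot 4^f + 4$ and the reindexing $d = n/f$,
\[
H_{2n} = \frac{1}{2n}\sum_{d\mid n}\phi(d)\bigl(2\cdot 4^{n/d}+4\bigr) = \frac1n\sum_{d\mid n}\phi(d)4^{n/d} + \frac{2}{n}\sum_{d\mid n}\phi(d).
\]
The elementary identity $\sum_{d\mid n}\phi(d)=n$ collapses the last term to the additive constant $2$, giving the claimed expression.

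The main obstacle is the spectral step, and everything hinges on the vanishing of $\det A_\lambda$: without it each Fourier mode would contribute a genuinely two-dimensional block and no clean closed form would appear. I would therefore verify carefully that the $o$-edges are one-way and oriented \emph{oppositely} on the two hexagons (the $x_i$-cycle running forward, the $x_i^{-1}$-cycle backward), since this is exactly what produces the off-diagonal blocks $S^{\pm 2}$ against diagonal blocks $S^{\pm 1}$ and forces the determinant to cancel. The only other delicate point is the Burnside bookkeeping, where the factor-of-two mismatch between the $\Z/2n$-quotient on length-$2n$ walks and the $n$-indexed necklace sum is absorbed precisely because the odd powers of $A$ are traceless.
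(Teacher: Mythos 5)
Your proposal is correct and follows essentially the same route as the paper's proof: identify cyclic paths with based closed walks modulo rotation, apply Burnside's lemma to get $H_n = \frac1n\sum_{d\mid n}\phi(d)\operatorname{tr}(M^{n/d})$, and use that the nonzero spectrum of the $12\times 12$ adjacency matrix is $\{2,-2,1,1,-1,-1\}$, so odd traces vanish and even ones equal $2(2^m+2)$, after which the identity $\sum_{d\mid n}\phi(d)=n$ yields the stated formula. The only difference is in how the spectrum is obtained: the paper simply states the characteristic polynomial of $M$, whereas you diagonalize block-by-block over the $\Z/6$ Fourier modes and observe each $2\times 2$ block $A_\lambda$ is singular with trace $\lambda+\lambda^{-1}$ --- a cleaner derivation of the same eigenvalues, but not a different proof.
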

This closed form for $H_n$ was communicated to us by Alex Miller \cite{Miller}. It follows from combining the eigenvalues of the edge adjacency matrix $M$ for the graph $\Gamma_\E$ with Burnside's lemma from group theory.

\begin{proof}
Take $\Gamma_\E$ and relabel the vertices $1, \dots, 12$ so that $x_1 \dots, x_6$ are relabeled $1, \dots , 6$, respectively and $x_1^{-1}, \dots, x_6^{-1}$ are relabeled $7, \dots, 12$, respectively. Form its $12 \times 12$ edge adjacency matrix $M$. This is the matrix whose $(i,j)$ entry is 1 if there is an edge from vertex $i$ to vertex $j$, and 0 otherwise. So,
 \begin{align*}
  M & = \left [
 \begin{array}{cccccc  cccccc}
    & 1 &   &   &   &   &      &   & 1 &   &   &   \\
    &   & 1 &   &   &   &      &   &   & 1 &   &   \\
    &   &   & 1 &   &   &      &   &   &   & 1 &   \\
    &   &   &   & 1 &   &      &   &   &   &   & 1 \\
    &   &   &   &   & 1 &    1 &   &   &   &   &   \\
  1 &   &   &   &   &   &      & 1 &   &   &   &   \\ 
    &   &   &   & 1 &   &      &   &   &   &   & 1 \\ 
    &   &   &   &   & 1 &    1 &   &   &   &   &   \\
  1 &   &   &   &   &   &      & 1 &   &   &   &   \\
    & 1 &   &   &   &   &      &   & 1 &   &   &   \\
    &   & 1 &   &   &   &      &   &   & 1 &   &   \\
    &   &   & 1 &   &   &      &   &   &   & 1 &   \\ 
 \end{array}
 \right ] \\
 & = \left [
 \begin{array}{cc}
  A & A^2 \\
  A^{-2} & A^{-1}
 \end{array}
\right ]
 \end{align*} 
 where $A$ is a matrix so that $A^6 = I$. 
 
 Suppose we take the $n^{th}$ power $M^n$ of $M$. Then the $(i,j)$ entry of $M^n$ is exactly the number of  paths going through $n + 1$ vertices, that start at vertex $i$ and end at vertex $j$.  
 Let $\Omega_n$ be the set of (non-closed) paths $v_1 v_2 \dots v_n$ in $\Gamma_\E$ so that there is an edge from $v_n$ to $v_1$. Then, 
 \[
  tr(M^n) = \# \Omega_n
 \]
 
 The cyclic paths of length $n$ in $\Gamma_\E$ correspond to exactly the elements of $\Omega_n$ up to cyclic permutation. So if $C_n$ is the cyclic group of order $n$, then $H_n = \# \left ( \Omega_n / C_n \right )$.
 
 The Burnside lemma says that the number of $C_n$-orbits in $\Omega_n$ is the average number of fixed points of the $C_n$ action. So,
 \[
  H_n = \frac 1n \sum_{\sigma \in C_n} \# \{ \omega \in \Omega_n \ | \ \sigma \omega = \omega\}
 \]

 Choose an element $\sigma \in C_n$. If $|\sigma| = d$, then $\sigma$ is the product of $n/d$ disjoint $d$-cycles. If $\omega \in \Omega_n$ is a word so that $\sigma(\omega) = \omega$, then $\omega = \nu^d$, where $\nu \in \Omega_{n/d}$. In other words, $|\sigma| = d$ implies the fixed set of $\sigma$ is in one-to-one correspondence with $\Omega_{n/d}$. The number of order $d$ elements of $C_n$ is $\phi(d)$, where $\phi$ is the Euler totient function. Therefore,
 \[
  H_n = \frac 1n \sum_{d | n} \phi(d) | \Omega_{n/d} |
 \]
 As previously noted, $|\Omega_{n/d}| = tr(M^{n/d})$. So we can write this sum as
 \[
   H_n = \frac 1n \sum_{d | n} \phi(d) tr(M^{n/d})
 \]
 The trace of a matrix is the sum of its eigenvalues. The characteristic polynomial of $M$ can be computed to be $-\lambda^6(\lambda - 2) (\lambda-1)^2(\lambda +1)^2(\lambda+2)$, and so its non-zero eigenvalues are $-2,-1,-1,1,1$, and 2. Therefore, 
 \begin{align*}
  tr(M^{k}) & = (-2)^k + 2(-1)^k + 2^k + 2 \\
   & = (2^k + 2) (1 + (-1)^k)\\
   & = \left \{ 
    \begin{array}{cc}
     0 & k \text{ odd}\\
     2(2^k + 2) & k \text{ even}
    \end{array}
    \right. 
 \end{align*}

 If $n$ is odd, then $n/d$ is odd, and so $tr(M^{n/d}) = 0$ for all $d | n$. Therefore, $H_n = 0$ if $n$ is odd. If $n = 2m$ is even, note that $d | m$ if and only if $n/d$ is even. So, we need only sum over those $d$ that divide $m$:
 \begin{align*}
  H_{2m} & = \frac{1}{2m} \sum_{d | m} \phi(d)2(2^{\frac{2m}{d}} + 2) \\
    & = \frac{1}{m} \sum_{d | m} \phi(d) 2^{\frac{2m}{d}} + 2 \frac{1}{m} \sum_{d | m} \phi(d) \\
    & = 2 + \frac{1}{m} \sum_{d | m} \phi(d) 4^{\frac{m}{d}}
 \end{align*}
where the last equality comes from the fact that $\sum_{d | m} \phi(d) = m$.
\end{proof}


\subsection{Proof of Theorem \ref{thm:LowerBound}.}
\label{sec:PfLowerBound}
We now get a lower bound on $\#\G^c(L,K)$. 

In Lemma \ref{lem:NumberOfPaths}, we show that the number of length $2m$ cyclic paths in $\Gamma_\E$ is 
\begin{align*}
 H_{2m} & = 2 + \frac{1}{m} \sum_{d | m} \phi(d) 4^{\frac{m}{d}} \\
  & \geq 2 + \frac 1 m 4^m\\
  & \geq 2 + 2^m
\end{align*}
where these inequalities hold for all $m > 0$. So we will use the simplified inequality $H_{2m} \geq 2 + 2^m$.

Any cyclic path of length exactly $2m$ can be reduced to a primitive cyclic path of length at most $2m$. So this gives a lower bound on the number of primitive cyclic paths of length at most $2m$. 

By Lemma \ref{lem:BoundOnPathLength}, if $\tau$ is a cyclic path in $\Gamma_\E$ with $|\tau| \leq N(L,K)$ for 
\[
 N(L,K) = \min \{\frac{1}{4 l_{max}} L, \sqrt{\frac K3}\}
\]
then $\gamma(\tau) \in \G^c(L,K)$. So we need to get a lower bound on the number of primitive, cyclic paths in $\Gamma_\E$ of length at most $N(L,K)$.

The function $2 + 2^m$ is increasing in $m$ for all $m$. There is some even number between $N(L,K) -2$ and $N(L,K)$. Assuming $N(L,K) - 2 > 0$, we can set $2m = N(L,K) - 2$, and get that the number of primitive, cyclic paths in $\Gamma_\E$ of length at most $N(L,K)$ is at least
\[
\#\G^c(L,K) \geq  2 + 2^{\frac 12 N(L,K) - 1} = 2+ \frac 12 2^{\frac 12 N(L,K)}
\]

If $N(L,K) = \sqrt{\frac K 3}$, this tells us that $K \geq 12$ implies
\[
 \#\G^c(L,K) \geq 2+ \frac 1 2 2^{\sqrt{ \frac{K}{12}}}
\]
If $N(L,K) = \frac{1}{4 l_{max}} L$, then $L \geq 8 l_{max}$ implies
\[
 \#\G^c(L,K) \geq 2+ \frac 1 2 2^{\frac{1}{8 l_{max}} L}
\]
Therefore, if $L \geq 8 l_{max}$ and $K \geq 12$,
\[
 \# \G^c(L,K) \geq
 2+ \frac 1 2  \min\{
  2^{\sqrt{ \frac{K}{12}}},  2^{\frac{1}{8 l_{max}} L} 
 \}
\]

%

\section{Lower bound for surfaces}
\label{sec:SurfaceBound}

Let $\S$ be an arbitrary surface. The lower bound for $\#\G^c(L,K)$ on $\S$ follows from the lower bound on pairs of pants. The idea is that we will count geodesics in different pairs of pants inside $\S$. To make sure that we do not over-count, we need the following lemma. 
\begin{lem}
\label{lem:CurvesDistinct}
 Let $f_1, f_2: \p \rightarrow \S$ be two embeddings of a pair of pants into $\S$. Suppose $\gamma_1$ and $\gamma_2$ are two non-simple closed curves on $\p$. If $f_1$ is not homotopic to $f_2$, then $f_1(\gamma_1)$ is not freely homotopic to $f_2(\gamma_2)$ inside $\S$.
\end{lem}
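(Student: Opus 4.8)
The plan is to argue the contrapositive: assuming $f_1(\gamma_1)$ is freely homotopic to $f_2(\gamma_2)$ in $\S$, I will show that $f_1$ and $f_2$ are homotopic. First I would pass to geodesic representatives. Since an essential embedded pair of pants is $\pi_1$-injective, each $f_i$ is homotopic to an embedding whose image $P_i := f_i(\p)$ is a pair of pants with geodesic boundary, and this geodesic pair of pants is unique in the homotopy class of $f_i$; moreover $P_i$ is convex in $\S$ (its universal cover embeds as a convex subset of $\h$). Consequently, for any closed curve freely homotopic into $P_i$, its geodesic representative in $\S$ lies inside $P_i$. Applying this to $f_i(\gamma_i)$, whose free homotopy class is carried by $P_i$, the common geodesic $g$ representing the class of $f_1(\gamma_1) = f_2(\gamma_2)$ satisfies $g \subset P_1 \cap P_2$. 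Note that $g$ is non-simple, since $f_i$ is an embedding and $i(\gamma_i,\gamma_i) > 0$.

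\textbf{Key step: the geodesic fills the pants.} The heart of the argument is that $g$ fills each $P_i$. A non-simple closed curve on a pair of pants is not freely homotopic to any boundary component (all of which are simple), so (taking $\gamma_i$ primitive, i.e.\ not a proper power of a boundary curve, which is the case for the curves produced by our construction) $g$ is non-peripheral in $P_i$. I would then use that the only essential, non-peripheral subsurface a pair of pants admits is an annular neighborhood of a boundary component: hence every complementary region of $g$ in $P_i$ is either a disk or a peripheral annulus, which is exactly the statement that $g$ fills $P_i$. The same reasoning applies verbatim to $P_1$ and to $P_2$, so $g$ fills both.

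\textbf{Conclusion.} Once $g$ is known to fill both $P_1$ and $P_2$, I would recover the cuffs intrinsically from $g$: in a regular neighborhood $N(g)$, the boundary curves that are not null-homotopic are isotopic to the three cuffs of the filled pair of pants, and this data depends only on $g$, not on which $P_i$ we started from. Thus $\partial P_1$ and $\partial P_2$ are isotopic; since both consist of geodesics, their geodesic representatives coincide, and a geodesic pair of pants is determined by its boundary geodesics together with the requirement that it contain $g$. Hence $P_1 = P_2$ as subsurfaces, which gives $f_1 \simeq f_2$.

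\textbf{Main obstacle.} The delicate point is precisely the claim that a non-simple geodesic determines a \emph{unique} embedded geodesic pair of pants, which rests entirely on the filling property of the previous paragraph. Two subtleties must be handled with care: first, one must genuinely exclude proper powers of boundary curves (e.g.\ $\partial^2$ is non-simple yet fills nothing and lies in infinitely many pants), so the hypothesis must be read as applying to curves whose geodesics fill $\p$; second, one must make rigorous that ``filling'' pins down the cuffs up to isotopy, i.e.\ that the minimal supporting subsurface of the free homotopy class of $g$ is well-defined, which is where I expect the main technical work to lie.
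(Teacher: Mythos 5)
Your proposal is sound, and its first half is exactly the paper's: argue the contrapositive, tighten $f_1(\p)$ and $f_2(\p)$ to geodesic pairs of pants $\p_1,\p_2$, and use convexity to place the common geodesic representative $\phi$ of $f_1(\gamma_1)\simeq f_2(\gamma_2)$ inside $\p_1\cap\p_2$. The key step is where you genuinely diverge. The paper never mentions filling; it analyzes the intersection itself: by an Euler characteristic count, every connected component of $\p_1\cap\p_2$ is a disk, a cylinder, or a pair of pants; a non-simple (primitive) geodesic cannot lie in a disk or a cylinder, so some component is a pair of pants, and this forces $\p_1=\p_2$. You instead prove that $\phi$ fills each $\p_i$ separately, and then recover the cuffs intrinsically from the essential boundary curves of a regular neighborhood $N(\phi)$, so that the supporting pair of pants is determined by $\phi$ alone. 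Both routes work: the paper's is more elementary and self-contained, while yours concentrates all the difficulty into the well-definedness of the minimal supporting subsurface, a standard but not-free fact that you rightly flag as the main technical work (one must also rule out complementary annuli whose core is peripheral but which do not abut a cuff; this uses that $\phi$ is connected and a curve parallel to a cuff separates $\p_i$). A genuine merit of your write-up is the primitivity caveat: a proper power of a cuff is non-simple, lies in a cylinder, and sits inside infinitely many geodesic pants, so the lemma really does require the curves to be primitive (or filling); the paper's own assertion that a non-simple geodesic ``cannot lie in a disk or a cylinder'' silently needs this too, and the application is unaffected because the geodesics constructed in the paper are primitive. Finally, note that both you and the paper pass from $\p_1=\p_2$ to ``$f_1$ is homotopic to $f_2$'' without comment; strictly speaking, two embeddings with the same geodesic image can still differ by a nontrivial mapping class of $\p$ (e.g.\ a permutation of the cuffs), but this does not harm the counting argument, which only needs that distinct geodesic pairs of pants carry disjoint sets of primitive non-simple geodesics.
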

\begin{proof}
 
 Suppose $f_1(\gamma_1)$ is freely homotopic to $f_2(\gamma_2)$. Then the geodesic representatives of $f_1(\gamma_1)$ and $f_2(\gamma_2)$ are the same. Let $\phi$ be the geodesic representative of these two curves.
 
 We can tighten the boundary curves of $f_1(\p)$ and $f_2(\p)$ to get pairs of pants $\p_1$ and $\p_2$ with geodesic boundary. Then $\phi$ is a non-simple geodesic that lies in both $\p_1$ and $\p_2$. Using the Euler characteristic, we see that connected components of $\p_1 \cap \p_2$ can be disks, cylinders, or a pair of pants. Since $\phi$ is non-simple, it cannot lie in a disk or a cylinder. So some component of $\p_1 \cap \p_2$ must be a pair of pants. But this means $\p_1 = \p_2$. Since homotopies of simple closed multicurves can be extended to ambient homotopies, $f_1$ is homotopic to $f_2$.
 \end{proof}

We will now prove the main theorem for arbitrary surfaces. We give a more precise, but messier, version of the theorem here.
\begingroup

\def \thetheorem{\ref{thm:LowerBoundSurface}}
\begin{theorem}
 Let $\S$ be a genus $g$ surface with $n$  geodesic boundary components, and let $X$ be a negatively curved metric on $\S$. Then whenever $K > 12$ and $L > 3 s_X \sqrt K$ we have 
 \[
 \# \G^c(L,K) \geq c(X)(\frac{L}{6 \sqrt K})^{6g-6+2n}(2 + \frac 12 2^{\sqrt{\frac{ K}{12}}})
 \]
 where $s_X$ and $c_X$ are constants that depend only on the metric $X$.
\end{theorem}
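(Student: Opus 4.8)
The plan is to realize $\S$ as a union of embedded, geodesically bounded pairs of pants, apply Theorem \ref{thm:LowerBound} inside each one, and sum the resulting geodesics while invoking Lemma \ref{lem:CurvesDistinct} to rule out overcounting. Set $\ell = \frac{L}{6\sqrt K}$. First I would record the reduction: if $\p \subset \S$ is an embedded subsurface with geodesic boundary that is a pair of pants with $l_{max}(\p) \le \ell$, then every closed geodesic lying in the interior of $\p$ is a closed geodesic of $\S$ with the same length and the same self-intersection number, since being a geodesic is local, $X$ restricts to $\p$, and the geodesic representative of a non-peripheral class stays inside $\p$. The hypotheses of Theorem \ref{thm:LowerBound} are then met: since $K > 12$ we have $6\sqrt K > 8$, so $l_{max}(\p) \le \ell < L/8$ gives $L > 8\,l_{max}(\p)$; and because $\tfrac16 \le \tfrac{\sqrt3}{4}$ the bound $l_{max}(\p)\le \ell$ forces $\sqrt{K/12} \le \frac{L}{8\,l_{max}(\p)}$, so the minimum in Theorem \ref{thm:LowerBound} is attained by the $K$-term. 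Hence each such $\p$ carries at least $2 + \tfrac12 2^{\sqrt{K/12}}$ geodesics of $\G^c(L,K)$.

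Next I would assemble these geodesics into a lower bound for $\#\G^c(L,K)$. A pair of pants has exactly three simple closed geodesics, all boundary-parallel, so every primitive non-peripheral geodesic produced by Theorem \ref{thm:LowerBound} is non-simple; discarding the bounded number of peripheral classes changes the per-pants count only by a constant, which I absorb into $c(X)$. Two non-homotopic embeddings of a pair of pants tighten to distinct geodesic-bordered pants, and by Lemma \ref{lem:CurvesDistinct} their non-simple interior geodesics are freely non-homotopic, hence distinct as geodesics of $\S$; geodesics inside a single pants are already distinct by Theorem \ref{thm:LowerBound}. Therefore, writing $P(\ell)$ for the number of homotopy classes of embedded geodesic-bordered pairs of pants $\p \subset \S$ with $l_{max}(\p)\le \ell$, we obtain $\#\G^c(L,K) \ge P(\ell)\,\bigl(2 + \tfrac12 2^{\sqrt{K/12}}\bigr)$.

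The main work, and the main obstacle, is the geometric counting estimate $P(\ell) \ge c(X)\,\ell^{\,6g-6+2n}$, valid once $\ell$ exceeds a metric threshold. An embedded pair of pants is determined up to isotopy by its boundary multicurve, so I would fix one multicurve $m_0$ that cuts off a pair of pants and count the length-$\le C\ell$ representatives in its mapping-class-group orbit: distinct orbit elements bound non-isotopic pants, so their number is a lower bound for $P(\ell)$. The number of such simple multicurves of bounded length in a fixed topological type grows polynomially of degree exactly $6g-6+2n$, the dimension of the space of measured laminations, which is where the exponent comes from; a lower bound of this order follows from the polynomial estimates of \cite{Rees81} and matches the asymptotics of \cite{Mirzakhani08}. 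The delicate points are (i) converting a total-boundary-length bound into the quantity $l_{max}$ that Theorem \ref{thm:LowerBound} uses, and (ii) the threshold: the collar lemma shows that the smallest embedded pants has size comparable to the collar width $s_X$ of the systole, so the estimate is nonvacuous only once $\ell \gtrsim s_X$, i.e. $L > 3 s_X\sqrt K$, which is exactly the hypothesis. Combining $P(\ell)\ge c(X)\ell^{6g-6+2n}$ with the previous display and $\ell = \frac{L}{6\sqrt K}$ yields the stated bound.
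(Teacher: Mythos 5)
Your plan is, in outline, the same as the paper's proof: sum the bound of Theorem \ref{thm:LowerBound} over embedded pairs of pants with geodesic boundary, invoke Lemma \ref{lem:CurvesDistinct} so that non-simple geodesics coming from non-homotopic pants are distinct, and produce the factor $(\frac{L}{6\sqrt K})^{6g-6+2n}$ by counting embedded pants of bounded boundary length via \cite{Mirzakhani08} (the paper deduces a lower bound $c'(X)L^{6g-6+2n}$, valid for $L>l_{sys}$, from Mirzakhani's asymptotics in exactly this way). Your verification that the minimum in Theorem \ref{thm:LowerBound} is achieved by the $K$-term once $l_{max}(\p)\le \frac{L}{6\sqrt K}$ is also the paper's computation, carried out there with the threshold $l_0 = \frac{\sqrt3 L}{4\sqrt K}$.

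However, the step you set aside as ``delicate point (i)'' is precisely where the paper does its real geometric work, and your proposal contains no argument for it; as written, it would fail. You propose to count boundary multicurves of total length at most $C\ell$ and conclude that the corresponding pants satisfy $l_{max}(\p)\le\ell$. No multiplicative constant $C$ can accomplish this: $l_{max}(\p)$ includes the seam lengths (distances between boundary components), and these blow up logarithmically as the boundary lengths shrink, so $l_{max}(\p)/l(\p)$ is unbounded over embedded pants. The correct statement is additive, $l_{max}(\p)\le l(\p)+s_X$, and proving it is a genuine computation: the paper bounds the embedded collar radius of a boundary edge $a$ of a hexagon by $\sinh^{-1}(Area(\p)/l(a))$, shows each seam is homotopic rel endpoints to a path of length at most $l(a)+l(b)+l(c)+r_a+r_b$, and concludes one may take $s_X = 3\sinh^{-1}(Area(X)/l_{sys})$. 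This additive constant is exactly what generates the length hypothesis: the paper needs $l_0 - s_X > l_{sys}$, i.e. $L > \frac{4}{\sqrt 3}(s_X+l_{sys})\sqrt K$, and then uses $\frac{L}{3\sqrt K}-s_X \ge \frac{L}{6\sqrt K}$ to reach the stated form, so the threshold is not merely a nonvacuousness condition but the price of the additive correction. A smaller issue: you discard up to three peripheral geodesics per pants and absorb the loss into $c(X)$, but the per-pants count $2+\frac12 2^{\sqrt{K/12}}$ tends to $3$ as $K\to 12^{+}$, so a $K$-independent multiplicative constant cannot absorb an additive loss of $3$; the paper avoids this entirely by observing that the construction behind Theorem \ref{thm:LowerBound} never produces a simple (boundary-parallel) geodesic, since such a geodesic would require a length-one cyclic path in $\Gamma_\E$ and that graph has no loops at a vertex.
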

\addtocounter{theorem}{-1}
\endgroup
\noindent \textbf{NB:} The constant $s_X$ is related to the width of a collar neighborhood of the systole in $X$.
\begin{proof}
 Consider the set of all pairs of pants $\p$ with geodesic boundary components inside $\S$. Given any such $\p \subset \S$, let $l_{max}(\p)$ be the length of the longest boundary component or longest arc connecting boundaries of $\p$ at right angles, inside $\p$. Then by Theorem \ref{thm:LowerBound} and Lemma \ref{lem:CurvesDistinct}
 \begin{equation}
  \label{eq:SumOfPants}
 \#\G^c(L,K) \geq \sum_{\substack{\p \subset \S \\ L \geq 8 l_{max}(\p)}}  2+ \frac 1 2  \min\{
  2^{\sqrt{ \frac{K}{12}}},  2^{\frac{1}{8 l_{max}(\p)} L}\}
 \end{equation}
 
 The condition $L \geq 8 l_{max}(\p)$ on each pair of pants $\p$ in the above sum comes from Theorem \ref{thm:LowerBound}. The other condition (that $K \geq 12$) is already assumed. Furthermore, simple closed curves are not counted by Theorem \ref{thm:LowerBound}. A simple closed curve  would be encoded by a length 1 path in $\Gamma_\E$ that stays at a single vertex, but $\Gamma_\E$ has no edges from a vertex to itself. So, we do not have to worry about overcounting them.
 
 Choose a pair of pants $\p$. Let $l(\p)$ be the sum of the lengths of its boundary components. Then we claim that
 \[
  l_{max}(\p) \leq l(\p) + s_X
 \]
for some constant $s_X$ depending only on the metric $X$ on $\S$.
Cut $\p$ into two right-angled hexagons, and consider just one of them. Suppose $s$ is a seam edge and that it is adjacent to boundary edges $a$ and $b$, and opposite boundary edge $c$. Let $Z_a(r)$ and $Z_b(r)$ be collar neighborhoods of radius $r$ about $a$ and $b$, respectively. Let $r_a$ and $ r_b$ be the largest radii so that these collar neighborhoods are embedded. Then 
\[
 r_a \leq \sinh^{-1}(\frac{Area(\p)}{l(a)}) \mbox{ and } r_b \leq \sinh^{-1}(\frac{Area(\p)}{l(b)})
\]
Note that $Z_a(r_a) \cup Z_b(r_b)$ covers at least two of the seam edges. In fact, there is a path between the endpoints of $s$ of length at most $l(a) + l(b) + l(c) + r_a + r_b$ (Figure \ref{fig:BoundaryNeighborhoods}).


\begin{figure}[h!]
 \centering
 \includegraphics[scale=1.2]{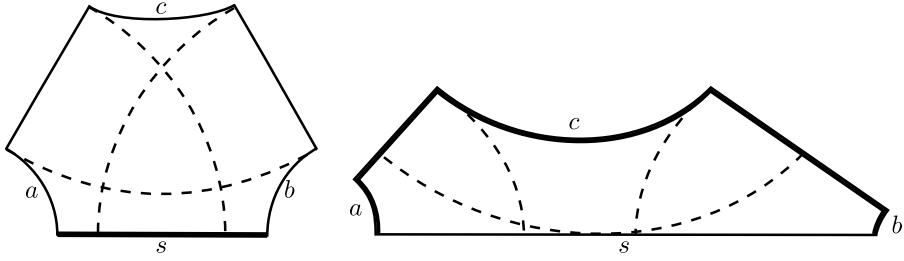}
 \caption{The possible curves joining the endpoints of $s$.}
 \label{fig:BoundaryNeighborhoods}
\end{figure}

 Let $l_{sys}$ be the length of the systole in $X$ and let $s_X = 3 \sinh^{-1}(\frac{Area(X)}{l_{sys}})$. Since $\sinh^{-1}(x)$ is an increasing function, we have
\[
 l(s) \leq s_X + l(\p)
\] 
Thus, $l_{max}(\p) \leq l(\p) + s_X$ for all pairs of pants embedded in $\S$.

Fix $L$ and $K$. Let $l_0 = \frac{\sqrt 3 L}{4 \sqrt K}$.  Note that if $l_0 \geq l_{max}(\p)$ then
  \[
  \frac{1}{8l_{max}(\p)}L > \sqrt {\frac{K}{12}}
  \]
This implies that $\min \{\frac{1}{8 l_{max}(\p)} L, \sqrt{\frac{K}{12}}\} = \sqrt{ \frac{K}{12}}$. Since $K \geq 12$, then $l_0 \geq l_{max}(\p)$ also implies that $L \geq 8 l_{max}(\p)$. Thus, we can simplify inequality (\ref{eq:SumOfPants}) as follows:
 \[
  \#\G^c(L,K) \geq \sum_
  {\substack{\p \subset \S \\ l_{max}(\p) \leq l_0}}
2 + \frac{1}{2} 2^{\sqrt{\frac{ K}{12}}}
 \]

 By \cite{Mirzakhani08}, the number of pairs of pants $\p$ with length at most $L$ grows asymptotically like $c(X)L^{6g-6+2n}$. So there is some constant $c'(X)$ so that this number is bounded below by $c'(X)L^{6g-6+2n}$ for all $L > l_{sys}$, where $l_{sys}$ is the length of the shortest closed geodesic on $\S$. 
 If $l(\p) < l_0 - s_X$ then $l_{max}(\p) < l_0$. The number of pairs of pants with $l(\p) < l_0 -s_X$ is at least $c'(X)(l_0 - s_X)^{6g-6+2n}$ (whenever $l_0 - s_X > l_{sys}$). So there are at least $c'(X)(l_0 - s_X)^{6g-6+2n}$ pairs of pants so that $l_{max}(\p) < l_0$.  So whenever $l_0 - s_X > l_{sys}$ and $K > 12$, we have
 \begin{equation}
 \label{eq:Messy}
  \# \G^c(L,K) \geq  c'(X)(l_0 - s_X)^{6g-6+2n}(2 + \frac 12 2^{\sqrt{\frac{ K}{12}}})
 \end{equation}
for some constant $c'(X)$ depending only on the metric $X$.

Note that $l_0 - s_X> l_{sys}$ if and only if $L > \frac{4}{\sqrt 3} (s_X + l_{sys}) \sqrt K$. Let
\[
 s'_X = s_X + l_{sys}
\]
Then we get inequality (\ref{eq:Messy}) whenever $L \geq \frac{4}{\sqrt 3} s'_X K$ and $K \geq 12$. 
We had $l_0 = \frac{\sqrt 3 L}{4 \sqrt K}$. Note that $\frac{4}{\sqrt 3} < 3$, so we replace $\frac{4}{\sqrt 3}$ by 3 to use nicer numbers in the statement of the theorem.  Furthermore, if $L > 6 s'_X \sqrt K$, then $\frac{L}{3 \sqrt K} - s_X \geq \frac{L}{6\sqrt K}$. Thus, inequality (\ref{eq:Messy}) implies
\[
 \# \G^c(L,K) \geq  c'(X)(\frac{L}{6\sqrt K})^{6g-6+2n}(2 + \frac 12 2^{\sqrt{\frac{ K}{12}}})
\]

\end{proof}

  \bibliographystyle{alpha}
 \bibliography{recount,researchForRecount}
 \end{document}